\DeclareSymbolFont{EulerExtension}{U}{euex}{m}{n}
\DeclareMathSymbol{\euintop}{\mathop} {EulerExtension}{"52}
\DeclareMathSymbol{\euointop}{\mathop} {EulerExtension}{"48}
\def \id{\operatorname{Id}}
\def \ker{\operatorname{Ker}}
\def \dim{\operatorname{dim}}
\def \Hom{\operatorname{Hom}}
\def \Id{\operatorname{Id}}
\def \id{\operatorname{Id}}
\def \ker{\operatorname{Ker}}
\numberwithin{equation}{section}
\newtheorem{theorem}{Theorem}[section]
\newtheorem{lemma}[theorem]{Lemma}
\newtheorem{proposition}[theorem]{Proposition}
\newtheorem{corollary}[theorem]{Corollary}
\newtheorem{definition}[theorem]{Definition}
\newtheorem{example}[theorem]{Example}
\newtheorem{remark}[theorem]{Remark}
\newtheorem{convention}[theorem]{Convention}
\begin{document}
\title{Construction of factorizable Hopf algebras}
\thanks{Kun Zhou: Yanqi Lake Beijing Institute of Mathematical Sciences and Applications, Beijing 101408, China. email: kzhou@bimsa.cn}

\subjclass[2020]{16T05, 18M20, 16T25}
\keywords{Factorizable Hopf algebra, Modular tensor category, Abelian extension, Drinfel'd double, $n$-rank Taft algebra, $C^*$-Hopf algebra.}

\author{Kun Zhou}
\date{}
\maketitle
\begin{abstract}
We focus on the problem of producing new modular tensor categories from Hopf algebras. To do this, we first give a general method to construct factorizable Hopf algebras. Then we apply the method to construct two families of ribbon factorizable Hopf algebras which are not quantum groups or Drinfel'd double. One family are point Hopf algebras which give non-semisimple modular tensor categories, while the other family are $C^*$-Hopf algebras which give new unitary modular tensor categories. Lastly, we determine the fusion rings of these unitary modular tensor categories.
\end{abstract}

\section{Introduction}
A modular tensor category (MTC) is a braided finite category with some additional algebraic structures (duality, twist, and a non-degeneracy axiom, see \cite{Ki, Ker}). It provides a topological quantum field theory in dimension 3, and in particular, invariants of links and 3-manifolds (see \cite{NV, Ker}). In the past few decades, an application of unitary MTCs to quantum computing has been
proposed by Freedman and Kitaev and advanced in the series of papers (see \cite{Fre, Fre1}). So MTCs are intensively studied in recent years. MTCs can be divided into semisimpe cases and non-sesimple cases. Both of them are studied by many authors (such as \cite{Ker, Shi, Ro, Mug, zheng}). It's known that the notion of a MTC may be thought of as a categorical generalization of a ribbon factorizable Hopf algebra. More precisely, semisimple MTCs may be viewed as a categorical generalization of semisimple ribbon factorizable Hopf algebras while non-semisimple ones may be thought as categorical generalization of a non-semisimple ribbon factorizable Hopf algebras. Many open problems about MTCs and the classification of MTCs make the construction of MTCs to be very important. There are many authors who have studied the construction of MTCs (such as \cite{Ro, Bla, Gui}). To our best knowledge, the known MTCs which are arising from Hopf algebras come from Drinfel'd double or small quantum groups. Therefore, constructing MTCs which are not from Drinfel'd double or small quantum groups may provide a new idea for the study of MTCs. To achieve this, we first give a general method to obtain factorizable Hopf algebras. Using the method, we construct a large number of factorizable ribbon Hopf algebras which are not quantum groups or Drinfel'd double. In particular, both semisimple MTCs(especially unitary MTCs) and non-semisimple MTCs are constructed.

Our idea about construction of factorizable Hopf algebras is as follows. Given a finite dimensional Hopf algebra $H$, we consider quotients of $D(H)$ which make them to be factorizable Hopf algebras. The difficult part is how to choose Hopf ideal $I$(resp. Hopf $\ast$ ideal $I$) of $D(H)$. To overcome the difficult, we consider central group-like elements of $D(H)$ and use extension of Hopf algebras to achieve this. It's known that every factorizable Hopf algebra is quotient of some Drinfel'd double, hence our construction method is natural in this sense.

The paper is organized as follows. Section 2 is devoted to give some notation and preliminary results. Then we give a general method to obtain factorizable Hopf algebras in Section 3. Section 4 is divided into two subsections. In subsection 4.1, we apply the method to construct two families of ribbon factorizable Hopf algebras. In subsection 4.2, we determine fusion rings of a family of new unitary modular tensor categories which are arising from $C^*$-Hopf algebras.

\begin{convention}\emph{Throughout the paper we work over an algebraically closed field $\Bbbk$ of characteristic 0 if not specified. All Hopf algebras in this paper are finite dimensional. For the symbol $\delta$, we mean the classical Kronecker's symbol. Our references for the theory of Hopf algebras (resp. tensor category) are \cite{MS,R} (resp. \cite{Ki}). For a Hopf algebra $H$, the antipode of $H$ will denoted by $S$. For a Hopf algebra $H$, the group of group-like elements in $H$ will be denoted by $G(H)$.}
\end{convention}

\section{Preliminaries}
We collect some necessary notions and results in this section.
\subsection{Modular tensor category and Factorizable Hopf algebra.}
A modular tensor category is a ribbon and finite abelian $\Bbbk$-linear category $\mathcal{C}$ satisfying a non-degeneracy condition, i.e. if $c_{V,W}\circ c_{W,V}=\Id_{W\otimes V}$ for arbitrary object $W$ then $V\cong \mathbb{I}^n$ for some $n\in \mathbb{N}$ (see \cite{Shi, Ker}). If in addition $\mathcal{C}$ is semsimple and unitary, here unitary means that there is conjugation $*:\mathcal{C}\rightarrow \mathcal{C}$ such that the Hermitian form $(f, g) = tr(fg^*)$ is positive definite on $\Hom(X, Y )$ for any two objects $X, Y\in \mathcal{C}$, then $\mathcal{C}$ is called unitary modular tensor category (see \cite{Ro}).

Factorizable Hopf algebras are special class of quasitriangular Hopf algebras. Recall that a quasitriangular Hopf algebra is a pair $(H, R)$ where $H$ is a Hopf algebra over $\Bbbk$ and $R=\sum R^{(1)} \otimes R^{(2)}$ is an invertible element in $H\otimes H$ such that
\begin{equation*}
 (\Delta \otimes \id)(R)=R_{13}R_{23},\; (\id \otimes \Delta)(R)=R_{13}R_{12},\;\Delta^{op}(h)R=R\Delta(h),
 \end{equation*}
for $h\in H$. Here by definition $R_{12}= \sum R^{(1)} \otimes R^{(2)}\otimes 1 $ and similarly for $R_{13}$ and $R_{23}$. For a quasitriangular Hopf algebra $(H,R)$, there are Hopf algebra maps $f_{R_{21}R}: H^{\ast cop}\rightarrow H$ and $g_{R_{21}R}: H^{\ast op}\rightarrow H$, given respectively by
$$f_{R_{21}R}(a):=(a\otimes \Id)(R_{21}R),\;\;g_{R_{21}R}(a):=(\Id\otimes a)(R_{21}R),\;f\in H^\ast.$$
A factorizable Hopf algebra is a quasitriangular Hopf algebra $(H, R)$ such that $f_{R_{21}R}$, or equivalently $g_{R_{21}R}$, is a linear isomorphism.

\subsection{Hopf exact sequence and Drinfel'd double.}
\begin{definition}\label{def2.1.1}
A short exact sequence of Hopf algebras is a sequence of Hopf algebras
and Hopf algebra maps
\begin{equation}\label{ext}
\;\; K\xrightarrow{\iota} H \xrightarrow{\pi} \overline{H}
\end{equation}
such that
\begin{itemize}
  \item[(i)] $\iota$ is injective,
  \item[(ii)]  $\pi$ is surjective,
  \item[(iii)] $\ker(\pi)= HK^+$, $K^+$ is the kernel of the counit of $K$.
\end{itemize}
\end{definition}

Take an exact sequence \eqref{ext}, then $K$ is a normal Hopf
subalgebra of $H$. Conversely, if $K$ is a normal Hopf subalgebra of a Hopf algebra
$H$, then the quotient coalgebra $H=H/HK^+=H/K^+H$ is a quotient Hopf algebra
and $H$ fits into an extension \eqref{ext}, where $\iota$ and $\pi$ are the canonical maps. If $H$ fits into an extension \eqref{ext} and $H$ is finite dimensional, then $\dim(H)=\dim(K)\dim(\overline{H})$ by the well known "normal basis" theorem for sub-Hopf algebras (see \cite{Sch}).

An extension \eqref{ext} above such that $K$ is commutative and $\overline{H}$ is cocommutative is called \emph{abelian}. In this situation, we know the extension \eqref{ext} can be written in the following form:
\begin{equation*}
\;\; \Bbbk^G\xrightarrow{\iota} H \xrightarrow{\pi} \Bbbk F,
\end{equation*}
where $G, F$ are finite groups. Abelian extensions were classified by Masuoka
(see \cite[Proposition 1.5]{M3}), and the above $H$ can be expressed as $\Bbbk^G\#_{\sigma,\tau}\Bbbk F$.
To give the description of $\Bbbk^G\#_{\sigma,\tau}\Bbbk F$, we need the following data
\begin{itemize}
\item[(i)] A matched pair of groups, i.e. a quadruple $(F,G,\triangleleft,\triangleright)$, where $G\stackrel{\triangleleft}{\leftarrow}G\times F \stackrel{\triangleright }{\rightarrow}F$ are action of groups on sets, satisfying the following conditions
    \begin{align*}
    g\triangleright(ff')=(g\triangleright f)((g\triangleleft f)\triangleright f'),\quad (gg')\triangleleft f=(g\triangleleft(g'\triangleright f))(g'\triangleleft f),
    \end{align*}
    for $g,g'\in G$ and $f,f'\in F$.
\item[(ii)] $\sigma:G\times F\times F\rightarrow \Bbbk^\times$ is a map such that
\begin{align*}
    \sigma(g\triangleleft f,f',f'')\sigma(g,f,f'f'')=\sigma(g ,f,f')\sigma(g,ff',f'')
    \end{align*}
    and $\sigma(1,f,f')=\sigma(g,1,f')=\sigma(g,f,1)=1$, for $g\in G$ and $f,f',f''\in F$.
\item[(iii)] $\tau:G\times G \times F \rightarrow \Bbbk^\times$ is a map satisfying
    \begin{align*}
    \tau(gg',g'',f)\tau(g,g',g''\triangleright f)=\tau(g',g'',f)\tau(g,g'g'',f)
    \end{align*}
    and $\tau(g,g',1)=\tau(g,1,f)=\tau(1,g',f)$, for $g,g',g''\in G$ and $f\in F$. Moreover, the $\sigma, \tau$ satisfy the following compatible condition
    \begin{align*}
    \sigma(gg',f,f')\tau(g,g',ff')&=\sigma(g,g'\triangleright f, (g'\triangleleft f)\triangleright f')\sigma(g',f,f')\\
    &\tau(g,g',f)\tau(g\triangleleft (g'\triangleleft f),g'\triangleleft f,f'),
    \end{align*}
    for $g,g',g''\in G$ and $f,f',f''\in F$.
\end{itemize}

\begin{definition}\cite[Section 2.2]{AA}\label{def2.1.2}
The Hopf algebra $\Bbbk^G\#_{\sigma,\tau}\Bbbk F$ is equal to $\Bbbk^G\otimes \Bbbk F$ as vector space and we write $a\otimes x$ as $a\#x$. The product, coproduct are given by
\begin{align*}
 &(e_g\#f).(e_{g'}\#f')=\delta_{g\triangleleft f,g'}\;\sigma(g,f,f')\;e_g\#(ff'),\\
 &\Delta(e_g\#f)=\sum_{g'g''=g}\;\tau(g',g'',f)\;e_{g'}\#g''\triangleright f\otimes e_{g''}\#f,
\end{align*}
The unit is $\sum_{g\in G}e_g\#1$ and the counit is $\epsilon(e_g\#f)=\delta_{g,1}$ and the antipode is
\begin{align*}
 S(e_g\#f)=\sigma(g^{-1},g\triangleright f,(g\triangleright f)^{-1})^{-1}\;
 \tau(g^{-1},g,f)^{-1}\;e_{(g\triangleleft f)^{-1}}\#(g\triangleright f)^{-1}.
\end{align*}
\end{definition}
Recall that a $C^*$-Hopf algebra is a semisimple $*$-Hopf algebra $H$  over $\mathbb{C}$ such
that $H$ with its underlying $*$-algebra structure is a $C^*$-algebra. In addition if $(H,R)$ is factorizable Hopf algebra, then the tensor category of $\ast$-representation of $H$ on finite dimensional Hilbert spaces is a unitary MTC. For an abelian extension $\mathbb{C}^G\#_{\sigma,\tau}\mathbb{C} F$, it's known that $\mathbb{C}^G\#_{\sigma,\tau}\mathbb{C} F$ is $C^*$-Hopf algebra with following involution
$$(e_g\#f)^*=\sigma(g,f,f^{-1})e_{g\triangleleft f}\#f^{-1}$$
if $|\sigma|=1$ and $|\tau|=1$ (\cite[Theorem 3]{Kac1}). The following example will be used to construct unitary MTCs.
\begin{example}\label{ex2.1.x}
\emph{Let $n\in \mathbb{N}$. A Hopf algebra $H$ belonging to $\Bbbk^G\#_{\sigma,\tau}\Bbbk F$ is denoted by $A(G,\sigma,n)$ if the data $(\triangleright,\tau)$ of $H$ is trivial, i.e. $g\triangleright f=1$ and $\tau=1$ where $g\in G, f\in F$ and the following conditions hold:
  \begin{itemize}
  \item[(i)] $F=\mathbb{Z}_{n}=\langle x|\;x^n=1\rangle$;
       \item[(ii)] there is $b\in G$ with order $n$ such that $g\triangleleft x=bgb^{-1}$.
  \end{itemize}}
\end{example}
A special case of $A(G,\sigma,n)$ is as follows:
\begin{example}\label{ex2.1.5}
\emph{Let $p, q$ be two odd prime numbers such that $p\equiv 1(\text{mod}\;q)$ and let $\omega$ be a primitive $q$th root of 1 in $\Bbbk$. Assume $t\in \mathbb{N}$ satisfying $t^q\equiv 1(\text{mod}\;p)$ and $t \not \equiv 1(\text{mod}\;p)$. Let $0\leq l\leq (q-1)$, then the Hopf algebra $\mathscr{A}_l$ \cite[Lemma 1.3.9]{Na2} belongs to $\Bbbk^G\#_{\sigma,\tau}\Bbbk F$. By definition, the data $(G,F,\triangleleft,\triangleright,\sigma,\tau)$ of $\mathscr{A}_l$ is given by the following way
\begin{itemize}
             \item[(i)] $G=\mathbb{Z}_p\rtimes \mathbb{Z}_q=\langle a,b|\;a^p=b^q=1,bab^{-1}=a^t\rangle$,\;$F=\mathbb{Z}_q=\langle g|\;g^q=1\rangle$. The action $\triangleright $ is trivial, and $a\triangleleft g^{i}=a^{t^i},b\triangleleft g^i=b$, for $0\leq i \leq q-1$.
              \item[(ii)] $\sigma(a^i b^j,g^m, g^n)=w^{jlq_{mn}}$, where $q_{mn}$ is the quotient of $m+n$ in the division by $q$ and $1 \leq i\leq p-1$, $0\leq j,m,n \leq q-1$.
              \item[(iii)] $\tau(g ,g',f)=1$ for $g,g'\in G$ and $f\in F$.
\end{itemize}}
\end{example}

To construct non-semisimple MTCs, we recall the definition of $n$-rank Taft algebra which is given in \cite[Section 5]{Hu}. Assume $n\in \mathbb{N}^*$ and let $M=\{(i,j)|\;1\leq i,j\leq n\}$. Let $\theta:M\times M\longrightarrow \Bbbk$ be the map which is defined by $$\theta(i,j)=\left\{
\begin{array}{cr}
q & i>j \\
1 & i=j\\
q^{-1} & i<j
\end{array} \right. .$$
Assume $q$ is a primitive $l$-th root of unity. Then the $n$-rank Taft algebra $\overline{\mathscr{A}}_q(n)$ is generated by $x_1,...,x_n$, and $g_1,...,g_n$ as an algebra, with the relations
$$g_ig_j=g_jg_i, \;g_i^{l}=1,\;g_ix_j=\theta(i,j)q^{\delta_{i,j}}x_jg_i,\;x_ix_j=\theta(i,j)x_jx_i, \;x_i^l=0.$$
The coproduct, counit and antipode are given by
 \begin{align*}
\Delta(g_i) = g_i\otimes g_i, \;\Delta(x_i) = x_i \otimes 1+g_i \otimes x_i,\\
\epsilon(g_i)=1, \;\epsilon(x_i)=0,\\
S(g_i)=g_i^{-1}, S(x_i) = -g_i^{-1}x_i,
\end{align*}
where $1\leq i,j \leq n$. When $n=1$, $\overline{\mathscr{A}}_q(1)$ is the Taft algebra.

Let $H$ be a Hopf algebra over $\Bbbk$. Recall the definition of Drinfel'd double of $H$ which is $D(H)=(H^*)^{cop}\otimes H$ as coalgebra. The multiplication of $D(H)$ is given by $(f\otimes h)(g\otimes k)=f [h_{(1)}\rightharpoonup g\leftharpoonup S^{-1}(h_{(3)})]\otimes h_{(2)}k$, where $f,g\in H^*, \;h,k\in H$ and $\langle a\rightharpoonup g\leftharpoonup b,c\rangle=\langle g,bca\rangle$ for $a,b,c\in H$. For convenience, we write $fh$ as $f\otimes h$ in the following content.  Let $\mathcal{R}$ be the standard universal $\mathcal{R}$-matrix of $D(H)$. The following result is shown in \cite[Theorem 3]{Kau}.
\begin{theorem}\label{thm0.x}
Let $g$ and $\alpha$ be the distinguished grouplike elements of $H$ and $H^*$, respectively. Then $(D(H), \mathcal{R})$ has a ribbon element if and only if there are $a\in G(H)$ and $\beta\in G(H^*)$ such that
\begin{itemize}
 \item[(i)] $a^2=g$ and $\beta^2=\alpha$;
  \item[(ii)] $S^2(h)=a(\beta\rightharpoonup h \leftharpoonup \beta^{-1})a^{-1}$,\; $h\in H$.
\end{itemize}
\end{theorem}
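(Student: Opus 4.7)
The plan is to apply the standard criterion for the existence of a ribbon element in a quasitriangular Hopf algebra and then translate it into conditions on the pair $(a,\beta)$. The criterion is: for a finite dimensional quasitriangular Hopf algebra $(A,\mathcal{R})$, a ribbon element exists if and only if there is a grouplike element $\ell\in G(A)$ whose conjugation action implements $S^2$ and whose square $\ell^2$ equals a specific grouplike of $A$ determined by $\mathcal{R}$ and the distinguished grouplikes of $A$. Thus the theorem reduces to the question of when $D(H)$ admits such an $\ell$.

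The first two steps describe $G(D(H))$ and compute the target central grouplike. Since $D(H)=(H^*)^{cop}\otimes H$ as a coalgebra, one has $G(D(H))=G(H^*)\times G(H)$, so every grouplike of $D(H)$ has the form $\ell=\beta\otimes a$ with $\beta\in G(H^*)$ and $a\in G(H)$. Using the multiplication rule of $D(H)$ recalled in the preliminaries, together with $\Delta(a)=a\otimes a$ and the identity $a\rightharpoonup\beta\leftharpoonup a^{-1}=\beta$ (which holds because $\beta$ is an algebra map on $H$), one computes $(\beta\otimes a)^2=\beta^2\otimes a^2$. On the other hand, the explicit form of the $\mathcal{R}$-matrix of $D(H)$ in the canonical dual basis, combined with Radford's description of the distinguished grouplike of $D(H)$ in terms of $g$ and $\alpha$, shows that the target grouplike is $\alpha\otimes g$ (up to inversion conventions, immaterial after possibly replacing $(a,\beta)$ with $(a^{-1},\beta^{-1})$). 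Hence the equation $\ell^2=\alpha\otimes g$ splits as the two independent conditions $a^2=g$ and $\beta^2=\alpha$, namely condition (i).

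The third step is to show that, for a grouplike $\ell=\beta\otimes a$, conjugation by $\ell$ implements $S^2_{D(H)}$ if and only if condition (ii) holds. I would compute $\ell(1\otimes h)\ell^{-1}$ directly using the multiplication rule, exploiting that $a$ is grouplike (so $a\rightharpoonup\varepsilon\leftharpoonup a^{-1}=\varepsilon$) and that $\beta\in G(H^*)$ is a character; tracking the resulting Sweedler contractions one finds that the expression simplifies to $1\otimes a(\beta\rightharpoonup h\leftharpoonup\beta^{-1})a^{-1}$. Setting this equal to $S^2_{D(H)}(1\otimes h)=1\otimes S^2(h)$ is exactly (ii). The parallel computation with $f\otimes 1$ in place of $1\otimes h$ yields a dual identity on $H^*$, which is automatic from (i), (ii), and Radford's $S^4$-formula $S^4(h)=g(\alpha\rightharpoonup h\leftharpoonup\alpha^{-1})g^{-1}$. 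Reading the equivalences backward supplies the converse direction.

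The main technical obstacle is the third step: the twisted multiplication of $D(H)$ couples the two tensor factors, so careful bookkeeping of Sweedler components is needed to simplify conjugation by $\beta\otimes a$ into the clean form appearing in (ii), and to check that the companion identity on $H^*$ follows automatically from (ii). Once that translation is carried out, both directions of the equivalence drop out.
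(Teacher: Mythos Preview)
The paper does not prove this statement at all: it is quoted verbatim as \cite[Theorem 3]{Kau} and used as a black box, with no argument given. So there is no proof in the paper for your proposal to be compared against.

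That said, your outline is essentially the strategy of the original Kauffman--Radford proof. The reduction to finding a grouplike $\ell\in G(D(H))$ with $\ell^2$ equal to the special grouplike $uS(u)^{-1}$ and with $\mathrm{Ad}_\ell=S^2_{D(H)}$ is exactly how they proceed, and the identification $G(D(H))=G(H^*)\times G(H)$ together with $(\beta\otimes a)^2=\beta^2\otimes a^2$ is correct. The two places where your sketch is thin are precisely the hard parts of the actual argument: first, the computation that the target grouplike $uS(u)^{-1}$ in $D(H)$ really is $\alpha\otimes g$ (this is a nontrivial calculation with the canonical $\mathcal{R}$-matrix and Radford's trace formula, not something one can wave away as ``up to inversion conventions''); second, the claim that the companion conjugation identity on $H^*$ follows automatically from (i), (ii) and Radford's $S^4$ formula requires a genuine check, since one must dualize condition (ii) and use $\beta^2=\alpha$, $a^2=g$ carefully. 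Both steps are carried out in detail in \cite{Kau}; your plan would reproduce that proof rather than offer an alternative.
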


\section{A way to construct factorizable Hopf algebras}\label{sec2.2}
This section is devoted to give a general method of construction of factorizable Hopf algebras. Assume $H$ is finite dimensional Hopf algebra in the following content. If $g\in G(H)$, we will write $|g|$ as the order of $g$ and denote $\langle g\rangle$ as the group algebra generated by $g$. Let $S_n$ be the symmetric group of degree $n$. Suppose $(H,R)$ is a quasitriangular Hopf algebra and $\pi:H\rightarrow K$ is surjective Hopf map. Then $(K,(\pi\otimes \pi)(R))$ is also a quasitriangular Hopf algebra. In particular, if $I$ is Hopf ideal of $H$ then $(H/I,\overline{R})$ is also quasitriangular Hopf algebra, where $\overline{R}$ is defined through the natural quotient map $\pi(h)=h+I$ for $h\in H$. For convenience, we denote $K^+$ as $\ker\epsilon$ for a Hopf algebra $K$. The following theorem is main result in the section.

\begin{theorem}\label{thm1.x}
Assume $n$ is odd number and $x\in G(H)$ with order $n$. If $\{a_i|\;1\leq i\leq m\}\subseteq H$ and $\chi\in G(H^*),\;\sigma,\tau\in S_m$ such that
\begin{itemize}
 \item[(i)] $xa_i=a_{\sigma(i)}x$ and $\{a_ix^j|\;1\leq i\leq m,\;1\leq j\leq n\}$ is linear basis of $H$;
  \item[(ii)] $a_i\leftharpoonup \chi=a_{\tau^{-1}(i)}$ and $\chi\rightharpoonup a_i=a_{\tau^{-1}\circ \sigma^{-1}(i)}$;
  \item[(iii)] $\chi(x)$ is primitive $n$-th root of unity and $|\chi|$=$n$;
\end{itemize}
then $\langle \chi x\rangle\subseteq D(H)$ is normal subHopf algebra and $(D(H)/I,\overline{\mathcal{R}})$ is factorizable Hopf algebra, where $I=D(H)\langle \chi x\rangle^{+}$ and $\mathcal{R}$ is standard universal $\mathcal{R}$-matrix of $D(H)$.
\end{theorem}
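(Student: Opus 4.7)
The plan is to first confirm that $\chi x$ generates a cyclic normal sub-Hopf algebra of $D(H)$ of order $n$, and then verify that the quotient $D(H)/I$ inherits factorizability from $D(H)$.

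For the first part, I would begin by observing that $\chi\otimes 1$ and $\epsilon\otimes x$ are both group-like in $D(H)$. Using the multiplication formula, $(\epsilon\otimes x)(\chi\otimes 1)=(x\rightharpoonup\chi\leftharpoonup x^{-1})\otimes x$, and since $\chi$ is an algebra homomorphism the inner factor satisfies $(x\rightharpoonup\chi\leftharpoonup x^{-1})(h)=\chi(x^{-1})\chi(h)\chi(x)=\chi(h)$. Hence $\chi$ and $x$ commute in $D(H)$, so $\chi x=\chi\otimes x$ is group-like. The hypotheses $|\chi|=|x|=n$ together with $(\chi x)^k=\chi^k\otimes x^k$ force $|\chi x|=n$.

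For normality, it suffices to check $\mathrm{ad}(y)(\chi x)\in\langle \chi x\rangle$ as $y$ ranges over an algebra-generating set of $D(H)$. On the generators $\chi$ and $x$ this is immediate from the commutation above. The substantive content lies in the generators $a_i\in H$ and the elements of $H^\ast$ dual to the basis $\{a_ix^j\}$. For $y=a_i$, one expands
\[
\mathrm{ad}(a_i)(\chi x)=\sum a_i^{(1)}(\chi x)S(a_i^{(2)})
\]
through the $D(H)$ multiplication, moves $x$-factors through the Sweedler pieces via the permutation $\sigma$ from condition (i), and evaluates $\chi$ on the resulting Sweedler pieces via $\tau$ using condition (ii). Because $\chi$ is a character, the triple-Sweedler expressions factor into scalars, and each adjoint computation collapses into a power of $\chi x$; a dual calculation handles the remaining $H^\ast$-generators. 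This confirms that $I=D(H)\langle \chi x\rangle^+$ is a two-sided Hopf ideal, so $A:=D(H)/I$ is a Hopf algebra of dimension $\dim(H)^2/n$.

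For factorizability of $(A,\overline{\mathcal{R}})$, I would use the factorizability of $D(H)$, which gives a Hopf-algebra isomorphism $\phi:=f_{\mathcal{R}_{21}\mathcal{R}}\colon D(H)^{\ast cop}\to D(H)$. The induced Drinfel'd map $\overline{\phi}:=f_{\overline{\mathcal{R}}_{21}\overline{\mathcal{R}}}\colon A^{\ast cop}\to A$ factors as
\[
A^{\ast cop}=I^\perp\hookrightarrow D(H)^{\ast cop}\xrightarrow{\phi} D(H)\twoheadrightarrow A,
\]
and both endpoints have dimension $\dim(H)^2/n$, so it suffices to prove injectivity, equivalently $\phi(I^\perp)+I=D(H)$. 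Setting $K:=\langle \chi x\rangle$ and $L:=\phi^{-1}(K)\subseteq D(H)^{\ast cop}$, the algebra-isomorphism property of $\phi$ gives $\phi^{-1}(I)=D(H)^{\ast cop}L^+$, and the kernel of $\overline{\phi}$ is thus $I^\perp\cap D(H)^{\ast cop}L^+$. A dimension count yields $\dim I^\perp+\dim D(H)^{\ast cop}L^+=\dim D(H)$, so triviality of the intersection is equivalent to $I^\perp+D(H)^{\ast cop}L^+=D(H)^{\ast cop}$ and reduces to a single non-degeneracy check. Condition (iii), that $\chi(x)$ is a primitive $n$-th root of unity, provides precisely this: the $\mathcal{R}$-pairing between $K$ and $L$ is controlled by the scalar $\chi(x)$, whose primitivity forbids any nonzero element of $I^\perp$ from landing in $D(H)^{\ast cop}L^+$.

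The main obstacle will be the normality verification, since expanding $\mathrm{ad}(a_i)(\chi x)$ requires juggling triple Sweedler indices, the antipode on elements of $H$ that need not be group-like, and the $\rightharpoonup,\leftharpoonup$ actions built into the Drinfel'd double's multiplication. The factorizability step is structurally cleaner by comparison — essentially a dimension/transversality argument — once the role of $L=\phi^{-1}(\langle \chi x\rangle)$ and of the scalar $\chi(x)$ has been pinned down.
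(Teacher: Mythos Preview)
Your approach to normality is more laborious than necessary and misses the key simplification: $\chi x$ is actually \emph{central} in $D(H)$. The paper proves this directly by computing $a_i(\chi x)$ and $(\chi x)a_i$ via the double's multiplication formula. Because $\chi$ is a character, the triple Sweedler sum in $a_i\chi$ collapses to $\chi\cdot(\chi^{-1}\rightharpoonup a_i\leftharpoonup\chi)$, and conditions (i)--(ii) (together with $\sigma\tau=\tau\sigma$) give $\chi^{-1}\rightharpoonup a_i\leftharpoonup\chi=a_{\sigma(i)}$, whence $a_i(\chi x)=\chi a_{\sigma(i)}x=(\chi x)a_i$; a parallel computation handles the dual-basis elements $E_{a_ix^j}$. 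Once centrality is known, normality is immediate and $\mathrm{ad}(a_i)(\chi x)=\epsilon(a_i)\chi x$ without ever writing down $\Delta(a_i)$. Your proposed route through $\sum a_i^{(1)}(\chi x)S(a_i^{(2)})$ introduces the unknown coproduct of $a_i$, which the hypotheses say nothing about; it is not clear this computation can be completed as you describe, and you yourself flag it as the main obstacle.

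The factorizability half has a genuine gap. Your reduction to the transversality statement $I^\perp\cap D(H)^{\ast cop}L^{+}=0$ is sound, but the last sentence is not a proof: you have not identified $L=\phi^{-1}(\langle\chi x\rangle)$, you have not said what ``the $\mathcal{R}$-pairing between $K$ and $L$'' is or why $\chi(x)$ governs it, and---most tellingly---your argument nowhere uses that $n$ is \emph{odd}. In the paper the oddness is indispensable: after establishing the basis $\{a_iE_{a_jx^k}\}$ of $D(H)/I$, one computes $g_{\overline{\mathcal{R}_{21}}\overline{\mathcal{R}}}$ on the dual basis and arrives at sums of the form $\sum_{l}\omega^{2jl}\,E_{a_kx^{-l}}a_{\tau^{-l}(i)}$; surjectivity then follows by a discrete Fourier inversion in the index $l$, which requires $\omega^{2}=\chi(x)^{2}$ to be a primitive $n$-th root of unity, i.e.\ $n$ odd. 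Any abstract argument must locate an equivalent step (for instance, showing that the relevant pairing scalar is $\chi(x)^{2}$ rather than $\chi(x)$), and as written yours does not.
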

We will introduce the following lemmas to show above theorem.
\begin{lemma}\label{lem3.0.x}
Assume $H$ satisfies the conditions of Theorem \ref{thm1.x}. Denote $\{E_{a_ix^j}|\;1\leq i\leq m,\;1\leq j\leq n\}$ as the dual basis of $\{a_ix^j|\;1\leq i\leq m,\;1\leq j\leq n\}$. Then the following equations hold:
\begin{itemize}
  \item[(i)] $\sigma \circ\tau=\tau \circ\sigma$,
  \item[(ii)]  $\chi E_{a_ix^j}=\chi(x)^jE_{a_{\tau(i)}x^j}$,
  \item[(iii)] $E_{a_ix^j}\chi=\chi(x)^jE_{a_{\sigma\circ\tau(i)}x^j}$,
\end{itemize}
where $1\leq i\leq m,\;1\leq j\leq n$.
\end{lemma}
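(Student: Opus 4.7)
\emph{Plan.} The key observation is that $x$ is grouplike, so $\Delta(x^l)=x^l\otimes x^l$ and $\chi(x^l)=\chi(x)^l$; this lets the $H^{\ast}$-actions on $H$ factor cleanly across powers of $x$ applied to any basis vector $a_i x^j$. I treat the three claims in order, using (i) to simplify the composition appearing in (iii).

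For (i), I would apply $\leftharpoonup\chi$ to both sides of the commutation relation $xa_i=a_{\sigma(i)}x$ from condition (i) of Theorem \ref{thm1.x}. Since $\Delta(xa_i)=(x\otimes x)\Delta(a_i)$, multiplicativity of $\chi$ gives
\[
(xa_i)\leftharpoonup\chi=\sum\chi(xa_{i,(1)})\,xa_{i,(2)}=\chi(x)\,x(a_i\leftharpoonup\chi)=\chi(x)\,xa_{\tau^{-1}(i)}=\chi(x)\,a_{\sigma\tau^{-1}(i)}\,x,
\]
whereas the analogous computation applied to $a_{\sigma(i)}x$ produces $\chi(x)\,a_{\tau^{-1}\sigma(i)}\,x$. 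Cancelling the nonzero scalar $\chi(x)$ and the basis element $x$ and comparing indices yields $\sigma\tau^{-1}=\tau^{-1}\sigma$, i.e. $\sigma\tau=\tau\sigma$.

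For (ii) and (iii), I would reduce each identity to an evaluation on a generic basis vector $a_kx^l$ using the convolution identities $(\chi E)(h)=E(h\leftharpoonup\chi)$ and $(E\chi)(h)=E(\chi\rightharpoonup h)$, which follow directly from the definition of multiplication in $H^{\ast}$. Because $x$ is grouplike,
\[
\Delta(a_kx^l)=\sum a_{k,(1)}x^l\otimes a_{k,(2)}x^l,
\]
and condition (ii) of Theorem \ref{thm1.x} then forces
\[
a_kx^l\leftharpoonup\chi=\chi(x)^l\,a_{\tau^{-1}(k)}x^l,\qquad \chi\rightharpoonup(a_kx^l)=\chi(x)^l\,a_{\tau^{-1}\sigma^{-1}(k)}x^l.
\]
Pairing against $E_{a_ix^j}$ collapses these to $\chi(x)^j\delta_{k,\tau(i)}\delta_{l,j}$ for (ii) and $\chi(x)^j\delta_{k,\sigma\tau(i)}\delta_{l,j}$ for (iii), matching the evaluations of the claimed right-hand sides $\chi(x)^jE_{a_{\tau(i)}x^j}$ and $\chi(x)^jE_{a_{\sigma\tau(i)}x^j}$ on the basis, which completes both identities.

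The argument is essentially mechanical, so I do not anticipate any real obstacle; the only care required is bookkeeping with $\sigma$, $\tau$, and their inverses. It is worth noting that part (i) is precisely what makes the symbol $\sigma\circ\tau$ in (iii) unambiguous, and that the oddness of $n$ and the primitivity of $\chi(x)$ in hypothesis (iii) of Theorem \ref{thm1.x} are not needed for the lemma itself; they enter only later when one uses $\langle\chi x\rangle$ as a normal sub-Hopf algebra.
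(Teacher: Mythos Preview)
Your proposal is correct and follows essentially the same approach as the paper for parts (ii) and (iii): evaluate the convolution products on a generic basis vector $a_kx^l$, use that $x$ is grouplike to split the $\leftharpoonup\chi$ and $\chi\rightharpoonup$ actions, and read off the Kronecker deltas. For part (i) there is a minor variation: the paper obtains $\sigma\tau=\tau\sigma$ in one line from the bimodule associativity $(\chi\rightharpoonup a_i)\leftharpoonup\chi=\chi\rightharpoonup(a_i\leftharpoonup\chi)$, whereas you instead apply $\leftharpoonup\chi$ to both sides of the commutation relation $xa_i=a_{\sigma(i)}x$; both arguments are equally short and valid.
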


\begin{proof}
Since $(\chi\rightharpoonup a_i)\leftharpoonup \chi=\chi\rightharpoonup (a_i\leftharpoonup \chi)$, we know (i). Directly, we have
\begin{align*}
\chi E_{a_ix^j}(a_kx^l)&=E_{a_ix^j}[(a_kx^l)\leftharpoonup \chi]\\
                &=E_{a_ix^j}[(a_k\leftharpoonup \chi) x^l]\chi(x)^l\\
                &=\chi(x)^j\delta_{\tau(i),k}\delta_{j,l},
\end{align*}
we have (ii). Since
\begin{align*}
E_{a_ix^j}\chi&=E_{a_ix^j}[\chi\rightharpoonup (a_kx^l)]\\
                &=E_{a_ix^j}[(\chi \rightharpoonup a_k) x^l]\chi(x)^l\\
                &=\chi(x)^j\delta_{\sigma\circ\tau(i),k}\delta_{j,l},
\end{align*}
we obtain (iii).
\end{proof}
Assume $I$ is a Hopf ideal of $D(H)$ and $a\in D(H)$. For simple, we write $a$ as $a+I$ in $D(H)/I$.
\begin{lemma}\label{lem3.1.x}
Assume $H$ satisfies the conditions of Theorem \ref{thm1.x}. Then
\begin{itemize}
  \item[(i)] $\chi x\in Z(D(H))$, where $Z(D(H))$ is the center of $D(H)$.
  \item[(ii)]  $\{a_iE_{a_jx^k}|\;1\leq i,j\leq m,\;1\leq k\leq n\}$ is linear basis of $D(H)/I$,
  \item[(iii)] $\{E_{a_ix^j}a_k|\;1\leq i,k\leq m,\;1\leq j\leq n\}$ is linear basis of $D(H)/I$.
\end{itemize}

\end{lemma}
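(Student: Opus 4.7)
The plan is to prove (i) by checking that $\chi x$ commutes with algebra generators of $D(H)$, and then use the resulting centrality together with $\chi x \equiv 1 \pmod{I}$ to reduce the standard bases of $D(H)$ to each of the two proposed bases, matching cardinalities via the normal basis theorem.

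For (i), since $H$ is generated by $\{a_i, x\}$ and $\{E_{a_j x^k}\}$ is a basis of $H^*$, it suffices to show $\chi x$ commutes with $x$, with each $a_i$, and with each $E_{a_j x^k}$. Commutation with $x$ is immediate: since $\chi$ is an algebra map on $H$ and $x$ is grouplike, $x \rightharpoonup \chi \leftharpoonup x^{-1} = \chi$, giving $x \chi = \chi x$ in $D(H)$. The key identity is $a_i \chi = \chi a_{\sigma(i)}$: expanding
\[
(\epsilon \otimes a_i)(\chi \otimes 1) = [a_{i,(1)} \rightharpoonup \chi \leftharpoonup S^{-1}(a_{i,(3)})] \otimes a_{i,(2)}
\]
and using that $\chi$ is an algebra map (so $\chi \circ S^{-1} = \chi^{-1}$), one identifies the $H$-component as $\chi^{-1} \rightharpoonup (a_i \leftharpoonup \chi)$. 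Then $a_i \leftharpoonup \chi = a_{\tau^{-1}(i)}$ from hypothesis (ii), and inverting the formula $\chi \rightharpoonup a_k = a_{\tau^{-1}\sigma^{-1}(k)}$ of hypothesis (iii) yields $\chi^{-1} \rightharpoonup a_m = a_{\sigma\tau(m)}$, so the $H$-component equals $a_{\sigma\tau\tau^{-1}(i)} = a_{\sigma(i)}$. Combined with $x a_i = a_{\sigma(i)} x$ from hypothesis (i) of Theorem \ref{thm1.x}, this gives $(\chi x) a_i = a_i (\chi x)$. For $E_{a_j x^k}$, a direct expansion with $S^{-1}(x) = x^{-1}$ gives $x E_{a_j x^k} = E_{a_{\sigma(j)} x^k} x$, and then Lemma \ref{lem3.0.x}(ii)--(iii) together with Lemma \ref{lem3.0.x}(i) force both $\chi x \cdot E_{a_j x^k}$ and $E_{a_j x^k} \cdot \chi x$ to equal $\chi(x)^k E_{a_{\sigma\tau(j)} x^k} x$.

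For (ii) and (iii), since $\chi$ and $x$ commute in $D(H)$, $(\chi x)^k = \chi^k x^k$; this equals $1$ in $D(H) = (H^*)^{cop} \otimes H$ iff $\chi^k = \epsilon$ and $x^k = 1$, so $|\chi x| = n$ and by the normal basis theorem $\dim(D(H)/I) = (mn)^2/n = m^2 n$, matching the cardinality of each proposed set. Working modulo $I$ with $x \equiv \chi^{-1}$, iterating Lemma \ref{lem3.0.x}(ii) yields
\[
a_i x^l E_{a_j x^k} \equiv a_i \chi^{-l} E_{a_j x^k} = \chi(x)^{-lk}\, a_i E_{a_{\tau^{-l}(j)} x^k} \pmod{I},
\]
so the set of (ii) spans $D(H)/I$ and hence is a basis by the dimension count. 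Analogously, starting from the basis $\{E_{a_i x^j} a_k x^l\}$ of $D(H)$ and using the relation $a_k \chi^{-1} = \chi^{-1} a_{\sigma^{-1}(k)}$ (a consequence of $a_i \chi = \chi a_{\sigma(i)}$ from (i)) together with the iterated form of Lemma \ref{lem3.0.x}(iii), each such element reduces modulo $I$ to a scalar multiple of $E_{a_{(\sigma\tau)^{-l}(i)} x^j} a_{\sigma^{-l}(k)}$, proving (iii).

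The main obstacle is the identity $a_i \chi = \chi a_{\sigma(i)}$ in part (i): it is the single step that simultaneously requires all three hypotheses of Theorem \ref{thm1.x} and the algebra-map property of $\chi$, and once it is in hand the remaining commutations and basis reductions are mechanical consequences of Lemma \ref{lem3.0.x}.
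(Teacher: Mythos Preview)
Your proof is correct and follows essentially the same route as the paper's: both prove centrality of $\chi x$ by checking commutation with the generators $a_i$ and $E_{a_jx^k}$ via the identities of Lemma~\ref{lem3.0.x}, and both obtain (ii)--(iii) by combining the dimension count from the normal basis theorem with the reduction $x\equiv\chi^{-1}$ modulo $I$. One small slip: the formula $\chi\rightharpoonup a_k=a_{\tau^{-1}\sigma^{-1}(k)}$ you invoke is part of hypothesis~(ii) of Theorem~\ref{thm1.x}, not hypothesis~(iii); and your explicit reduction for part~(iii) is more detailed than the paper's ``similarly'', which is fine.
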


\begin{proof}
To prove (i), we only need to show $a_i(\chi x)=(\chi x)a_i$ and $E_{a_ix^j}(\chi x)=(\chi x)E_{a_ix^j}$ for $1\leq i\leq m,\;1\leq j\leq n$. Directly, we have
\begin{align*}
a_i(\chi x)&=\chi(\chi^{-1}\rightharpoonup a_i\leftharpoonup \chi)x\\
                &=\chi(a_{\sigma\circ \tau(i)}\leftharpoonup \chi)x\\
                &=\chi a_{\sigma(i)}x=(\chi x)a_i.
\end{align*}
Since (ii)-(iii) of Lemma \ref{lem3.0.x}, we get $E_{a_ix^j}(\chi x)=\chi(x)^jE_{a_{\sigma\circ \tau (i)}x^j}x$ and $(\chi x)E_{a_ix^j}=\chi(x)^jxE_{a_{\tau (i)}x^j}$. Next, we show $E_{a_{\sigma\circ \tau (i)}x^j}x=xE_{a_{\tau (i)}x^j}$. By definition, we know $xE_{a_{\tau (i)}x^j}=(x\rightharpoonup E_{a_{\tau (i)}x^j}\leftharpoonup x^{-1} )x$. Note that $x^{-1}a_k=a_{\sigma^{-1}(k)}x^{-1}$ and so $(x\rightharpoonup E_{a_{\tau (i)}x^j}\leftharpoonup x^{-1} )=E_{a_{\sigma\circ \tau (i)}x^j}$. This implies $E_{a_{\sigma\circ \tau (i)}x^j}x=xE_{a_{\tau (i)}x^j}$.

Since we have shown $\chi x\in Z(D(H))$, we know $\langle \chi x\rangle\xrightarrow{\iota} D(H) \xrightarrow{\pi} D(H)/I$ is exact sequence. Thus $\dim(D(H)/I)=\frac{\dim(D(H))}{n}$. Denote $V$ as the linear subspace which is spanned by $\{a_iE_{a_jx^k}|\;1\leq i,j\leq m,\;1\leq k\leq n\}$. Then we have $\dim(V)\leq \frac{\dim(D(H))}{n}$. Note that $a_i x^jE_{a_kx^l}=\chi(x)^{-jl}a_iE_{a_kx^l}$ in $D(H)/I$ by Lemma \ref{lem3.0.x}, hence $V=D(H)/I$. Due to dimension reason, we know (ii). Similarly, we have (iii).
\end{proof}

\textbf{Proof of Theorem \ref{thm1.x}.} Due to Lemma \ref{lem3.1.x}, we know $\langle \chi x\rangle$ is normal subHopf algebra. To complete the proof, we will show $g_{\overline{\mathcal{R}_{21}}\overline{\mathcal{R}}}$ is surjective. By definition, we know
\begin{align*}
\mathcal{R}_{21}\mathcal{R}&=\sum\limits_{1\leq i,k\leq m}\sum\limits_{1\leq j,l\leq n}E_{a_kx^l}a_ix^{j}\otimes a_k x^l E_{a_ix^j}.
\end{align*}
Denote $\omega$ as $\chi(x)$. Since $x=\chi^{-1}$ in $D(H)/I$ and $\chi^{-l} E_{a_ix^j}=w^{-jl}E_{a_{\tau^{-l}(i)}x^j}$ by Lemma \ref{lem3.0.x}, we get
\begin{align*}
\overline{\mathcal{R}_{21}\mathcal{R}}&=\sum\limits_{1\leq i,k\leq m}\sum\limits_{1\leq j,l\leq n}E_{a_kx^l}a_ix^{j}\otimes \omega^{-jl} a_k E_{a_{\tau^{-l}(i)}x^j}\\
&=\sum\limits_{1\leq i,k\leq m}\sum\limits_{1\leq j,l\leq n}(E _{a_kx^{-l}})(a_{\tau^{-l}(i)})x^{j}\otimes \omega^{jl} a_k E_{a_{i}x^j}.
\end{align*}
By (ii) of Lemma \ref{lem3.1.x}, $\{a_iE_{a_jx^k}|\;1\leq i,j\leq m,\;1\leq k\leq n\}$ is linear basis of $D(H)/I$. Denote its dual basis as $\{A(i,j,k)|\;1\leq i,j\leq m,\;1\leq k\leq n\}$. By definition,  $g_{\overline{\mathcal{R}_{21}}\overline{\mathcal{R}}}(A(k,i,j))=\sum_{1\leq l\leq n}\omega^{jl}(E _{a_kx^{-l}})(a_{\tau^{-l}(i)})x^{j}$. Since $xa_i=a_{\sigma(i)}x$ for $1\leq i\leq m$, we have $a_{\tau^{-l}(i)}x^{j}=x^j a_{\sigma^{-j}\circ \tau^{-l}(i)}$. Hence
\begin{align*}
g_{\overline{\mathcal{R}_{21}}\overline{\mathcal{R}}}(A(k,i,j))&=\sum_{1\leq l\leq n}\omega^{jl}(E _{a_kx^{-l}})x^j (a_{\sigma^{-j}\circ \tau^{-l}(i)})\\
&=\sum_{1\leq l\leq n}\omega^{jl}(E _{a_kx^{-l}})\chi^{-j} (a_{\sigma^{-j}\circ \tau^{-l}(i)})\\
&=\sum_{1\leq l\leq n}\omega^{jl}[\omega^{jl}E _{a_{(\tau\circ \sigma)^{-j}(k)}x^{-l}}] (a_{\sigma^{-j}\circ \tau^{-l}(i)})\quad (\text{Lemma } \ref{lem3.0.x})\\
&=\sum_{1\leq l\leq n}\omega^{2jl}[E _{a_{(\tau\circ \sigma)^{-j}(k)}x^{-l}}] (a_{\sigma^{-j}\circ \tau^{-l}(i)}).
\end{align*}
Note that $\sigma\tau=\tau \sigma$ by Lemma \ref{lem3.0.x}, we know
\begin{align*}
g_{\overline{\mathcal{R}_{21}}\overline{\mathcal{R}}}(A((\tau \circ \sigma)^j(k),\sigma^{j}(i),j))=\sum_{1\leq l\leq n}\omega^{2jl}[E _{a_{k}x^{-l}}] (a_{\tau^{-l}(i)}).
\end{align*}
Since $n$ is odd, we know $\omega^2$ is primitive $n$-th root of unity. This implies
\begin{align*}
\sum_{1\leq j\leq n}\omega^{-2jl'}g_{\overline{\mathcal{R}_{21}}\overline{\mathcal{R}}}(A((\tau \circ \sigma)^j(k),\sigma^{j}(i),j))=n[E _{a_{k}x^{-l'}}] (a_{\tau^{-l'}(i)}),
\end{align*}
where $1\leq l'\leq n$. Due to Lemma \ref{lem3.1.x} and the above $i\in \{1,...,m\}$ is arbitrary, we know $g_{\overline{\mathcal{R}_{21}}\overline{\mathcal{R}}}$ is surjective.
\qed

In order to use Theorem \ref{thm1.x} more flexibly, we give the following result.
\begin{corollary} \label{coro5.x}
Assume $n$ is odd number and $x\in G(H)$ with order $n$. If $\{a_i|\;1\leq i\leq m\}\subseteq H$ and $\chi\in G(H^*),\;\sigma,\tau\in S_m$ such that
\begin{itemize}
 \item[(i)] $xa_i=a_{\sigma(i)}x$ and $\{a_ix^j|\;1\leq i\leq m,\;1\leq j\leq n\}$ is linear basis of $H$;
  \item[(ii)] $\chi E_{a_ix^j}=\chi(x)^jE_{a_{\tau(i)}x^j}$ and $E_{a_ix^j}\chi=\chi(x)^jE_{a_{\sigma\circ\tau(i)}x^j}$,
  \item[(iii)] $\chi(x)$ is primitive $n$th root of unity and $|\chi|$=$n$;
\end{itemize}
then $\langle \chi x\rangle\subseteq D(H)$ is normal subHopf algebra and $(D(H)/I,\overline{\mathcal{R}})$ is factorizable Hopf algebra, where $I=D(H)\langle x\rangle^{+}$ and $\mathcal{R}$ is standard universal $\mathcal{R}$-matrix of $D(H)$.
\end{corollary}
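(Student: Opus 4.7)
The plan is to deduce Corollary \ref{coro5.x} directly from Theorem \ref{thm1.x} by observing that the two hypotheses are equivalent: conditions (i) and (iii) coincide verbatim, and condition (ii) of the corollary is merely a reformulation of condition (ii) of the theorem. One direction is already proved: Lemma \ref{lem3.0.x}(ii)--(iii) shows that Theorem \ref{thm1.x}(ii) implies the two identities in Corollary \ref{coro5.x}(ii).

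For the converse direction, I would evaluate $\chi E_{a_ix^j}=\chi(x)^jE_{a_{\tau(i)}x^j}$ on an arbitrary basis element $a_kx^l\in H$. Using the standard identity $(fg)(h)=g(h\leftharpoonup f)$ in $H^*$ (with $h\leftharpoonup f:=f(h_{(1)})h_{(2)}$), together with the fact that $\chi$, being group-like in $H^*$, is an algebra character on $H$ (so $h\mapsto h\leftharpoonup\chi$ is multiplicative), the left-hand side becomes
\[
E_{a_ix^j}\bigl((a_k\leftharpoonup\chi)(x^l\leftharpoonup\chi)\bigr)=\chi(x)^l\,E_{a_ix^j}\bigl((a_k\leftharpoonup\chi)\,x^l\bigr).
\]
Matching against $\chi(x)^jE_{a_{\tau(i)}x^j}(a_kx^l)=\chi(x)^j\delta_{\tau(i),k}\delta_{j,l}$ in the basis $\{a_{i'}x^{j'}\}$ provided by hypothesis (i) forces $a_k\leftharpoonup\chi=a_{\tau^{-1}(k)}$. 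A symmetric computation on $E_{a_ix^j}\chi=\chi(x)^jE_{a_{\sigma\circ\tau(i)}x^j}$, invoking $(fg)(h)=f(g\rightharpoonup h)$ with $g\rightharpoonup h=h_{(1)}g(h_{(2)})$ and the multiplicativity of $\chi\rightharpoonup\cdot\,$, yields $\chi\rightharpoonup a_k=a_{\tau^{-1}\circ\sigma^{-1}(k)}$. Together these are precisely the two identities of Theorem \ref{thm1.x}(ii).

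Once the hypotheses are shown to be equivalent, the entire proof of Theorem \ref{thm1.x}---including Lemmas \ref{lem3.0.x} and \ref{lem3.1.x}, the centrality of $\chi x$ in $D(H)$, and the surjectivity argument for $g_{\overline{\mathcal{R}_{21}}\overline{\mathcal{R}}}$ that hinges on $n$ being odd---applies verbatim, giving normality of $\langle\chi x\rangle$ and factorizability of the quotient $(D(H)/I,\overline{\mathcal{R}})$. I read the occurrence ``$I=D(H)\langle x\rangle^{+}$'' in the statement as a typographical slip for $D(H)\langle\chi x\rangle^{+}$, consistent with Theorem \ref{thm1.x}; indeed $\langle x\rangle$ alone need not be normal in $D(H)$, since $x$ is not central on its own (only the product $\chi x$ is, by Lemma \ref{lem3.1.x}(i)).

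The main obstacle is purely a matter of bookkeeping with dual-pairing conventions: one must invoke the character property of $\chi$ cleanly so that $(a_kx^l)\leftharpoonup\chi$ factors as $\chi(x)^l(a_k\leftharpoonup\chi)x^l$ and the coefficient matching in the basis $\{a_ix^j\}$ becomes unambiguous. Apart from this, no genuinely new ingredient beyond those already present in Lemma \ref{lem3.0.x} is required---the corollary is essentially a restatement of Theorem \ref{thm1.x} in which hypothesis (ii) has been transported from $H$ to $H^*$ along the duality.
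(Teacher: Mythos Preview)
Your proposal is correct and follows essentially the same approach as the paper: the paper's proof is the single sentence ``Note that the condition (ii) is equivalent to the condition (ii) of Theorem \ref{thm1.x} due to the proof of Lemma \ref{lem3.0.x},'' and your argument is precisely the unpacking of that reference, running the evaluation-on-basis-elements computation of Lemma \ref{lem3.0.x} in reverse to recover $a_k\leftharpoonup\chi=a_{\tau^{-1}(k)}$ and $\chi\rightharpoonup a_k=a_{\tau^{-1}\sigma^{-1}(k)}$. Your identification of the typo $\langle x\rangle^+$ for $\langle\chi x\rangle^+$ is also consistent with the paper's Theorem \ref{thm1.x}.
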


\begin{proof}
Note that the condition (ii) is equivalent to the condition (ii) of Theorem \ref{thm1.x} due to the proof of Lemma \ref{lem3.0.x}.
\end{proof}

\section{Some applications}
In this section, we first apply Theorem \ref{thm1.x} to construct two families of ribbon factorizable Hopf algebras and then determine the fusion rings of unitary modular tensor categories which are arising from one of them.
\subsection{Two families of factorizable Hopf algebras.} Recall that we have defined Hopf algebras $A(G,\sigma,n)$ in Example \ref{ex2.1.x}. Let $\chi:A(G,\sigma,n)\rightarrow \Bbbk$ be the algebra map which is determined by $\chi(e_g)=\delta_{g,b}$ and $\chi(x)=\omega$, where $\omega$ is primitive $n$-th root of unity. Then we have
\begin{proposition}\label{pro3.1.x}
Suppose $n$ is odd. Then the Hopf algebra $D(A(G,\sigma,n))/I$ is factorizable Hopf algebra with dimension $n|G|^2$, where $I=D(A(G,\sigma,n))\langle \chi x\rangle^{+}$.
\end{proposition}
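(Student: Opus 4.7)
The strategy is to apply Theorem \ref{thm1.x} to $H = A(G,\sigma,n)$ with the obvious data: set $a_g := e_g \# 1$ for $g \in G$ (so $m = |G|$), take $1 \# x \in H$ as the distinguished grouplike of order $n$, and use the character $\chi$ defined just before the proposition. Once the three hypotheses of Theorem \ref{thm1.x} are verified for this data, factorizability of $(D(H)/I, \overline{\mathcal{R}})$ is immediate, and the dimension count follows from the normal basis theorem.

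For condition (i), the set $\{a_g x^j : g \in G,\; 0 \leq j \leq n-1\}$ is a basis of $A(G,\sigma,n)$ by Definition \ref{def2.1.2}. Using the product formula given there, together with $g' \triangleleft x = b g' b^{-1}$ and $\sigma(\cdot,\cdot,1) = 1$, a direct computation yields $(1 \# x)(e_g \# 1) = (e_{b^{-1} g b} \# 1)(1 \# x)$, so the permutation required by Theorem \ref{thm1.x} (i) is right conjugation $g \mapsto b^{-1} g b$. For condition (ii), triviality of $\triangleright$ and $\tau = 1$ reduce the coproduct of $H$ to $\Delta(e_g \# 1) = \sum_{g_1 g_2 = g}(e_{g_1}\#1) \otimes (e_{g_2}\#1)$, and the definition $\chi(e_g) = \delta_{g,b}$ then gives
\begin{align*}
\chi \rightharpoonup a_g = a_{g b^{-1}}, \qquad a_g \leftharpoonup \chi = a_{b^{-1} g}.
\end{align*}
Taking the permutation from Theorem \ref{thm1.x} (ii) to be $g \mapsto b g$, one checks $a_g \leftharpoonup \chi = a_{\tau^{-1}(g)}$ and $\tau^{-1}\sigma^{-1}(g) = b^{-1}(b g b^{-1}) = g b^{-1}$, as required. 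Condition (iii) has two parts: $\chi(1\# x) = \omega$ is primitive $n$-th root of unity by choice of $\omega$, and $|\chi| = n$ follows by combining $\chi^k(1\#x) = \omega^k$ (forcing $n \mid |\chi|$) with the computation $\chi^n(e_g\#x^j) = \omega^{nj}\delta_{b^n, g} = \delta_{g,1} = \epsilon(e_g \# x^j)$, obtained by iterating $\Delta(e_g\#x^j) = \sum_{g_1 g_2 = g}(e_{g_1}\#x^j) \otimes (e_{g_2}\#x^j)$.

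With the three hypotheses of Theorem \ref{thm1.x} verified, that theorem delivers both that $\langle \chi x\rangle \subseteq D(H)$ is a normal Hopf subalgebra and that $(D(H)/I, \overline{\mathcal{R}})$ is factorizable. The short exact sequence $\langle \chi x\rangle \hookrightarrow D(H) \twoheadrightarrow D(H)/I$ together with the normal basis theorem (invoked after Definition \ref{def2.1.1}) yields $\dim(D(H)/I) = (\dim D(H))/n = (n|G|)^2/n = n|G|^2$, giving the claimed dimension. The main obstacle is checking condition (ii) and the compatibility $\sigma \circ \tau = \tau \circ \sigma$; once the two permutations are identified as right conjugation and left multiplication by $b$ in $G$, both compositions collapse to the single map $g \mapsto gb$, so the commutation is automatic and each remaining condition becomes a routine verification against Definition \ref{def2.1.2}.
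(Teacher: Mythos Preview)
Your proof is correct and follows essentially the same approach as the paper's: both apply Theorem~\ref{thm1.x} with $a_g = e_g\#1$, identify the permutation in (i) as $\sigma(g)=g\triangleleft x^{-1}=b^{-1}gb$ and the permutation in (ii) as $\tau(g)=bg$, and then verify the three hypotheses directly. Your write-up supplies more detail than the paper (particularly the explicit check that $|\chi|=n$ and the dimension count via the normal basis theorem), and your closing remark about $\sigma\tau=\tau\sigma$ is a harmless consistency check rather than an additional hypothesis---that commutation is already derived in Lemma~\ref{lem3.0.x}(i) from condition (ii).
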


\begin{proof}
We only need to prove $A(G,\sigma,n)$ satisfies the conditions of Theorem \ref{thm1.x}. Since $\{e_gx^j|\;g\in G, 1\leq j\leq q\}$ is linear basis of $A(G,\sigma,n)$ and $xe_g=e_{g\triangleleft x^{-1}}x$, the condition (i) of Theorem \ref{thm1.x} holds. Define $\sigma,\tau:G\longrightarrow G$ by $\sigma(g)=g\triangleleft x^{-1}$ and $\tau(g)=bg$ for $g\in G$. Then it can be seen that $e_g\leftharpoonup \chi=e_{\tau^{-1}(g)}$ and $\chi\rightharpoonup e_g=e_{\tau^{-1}\circ \sigma^{-1}(g)}$ for $g\in G$, hence the condition (ii) of Theorem \ref{thm1.x} holds. Directly, we have $|\chi|=n$ and thus (iii) of Theorem \ref{thm1.x} holds.
\end{proof}
Since $\mathscr{A}_l$ belongs to $A(G,\sigma,n)$ and $\mathscr{A}_l$ over $\mathbb{C}$ is $C^*$-Hopf algebra, we have
\begin{corollary}\label{cor4.2.x}
The Hopf algebra $D(\mathscr{A}_l)/I$ over $\mathbb{C}$ is factorizable $C^*$-Hopf algebra with dimension $p^2q^3$, where $I=D(\mathscr{A}_l)\langle \chi x\rangle^{+}$.
\end{corollary}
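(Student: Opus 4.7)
The plan is to apply Proposition \ref{pro3.1.x} to $\mathscr{A}_l$ to obtain factorizability and the correct dimension, and then separately to verify that the quotient inherits the $C^*$-structure from $D(\mathscr{A}_l)$.

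The first step is to confirm that $\mathscr{A}_l$ really falls in the family $A(G,\sigma,n)$ of Example \ref{ex2.1.x}, with $G=\mathbb{Z}_p\rtimes\mathbb{Z}_q$, $F=\mathbb{Z}_q=\langle g\rangle$, and $n=q$. From the data recorded in Example \ref{ex2.1.5}, the actions $\triangleright$ and the cocycle $\tau$ are trivial, while the relation $bab^{-1}=a^t$ combined with $a\triangleleft g=a^t$ and $b\triangleleft g=b$ says precisely that $h\triangleleft g=bhb^{-1}$ on each generator $h$ of $G$, and hence (since $h\mapsto h\triangleleft g$ is a group automorphism) for all $h\in G$. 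As $b\in G$ has order $q$, the hypotheses of Example \ref{ex2.1.x} are met with $x:=g$. Applying Proposition \ref{pro3.1.x} then immediately gives that $D(\mathscr{A}_l)/I$ is factorizable, of dimension $n|G|^2=q\cdot(pq)^2=p^2q^3$.

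For the $C^*$ property, I would argue as follows. Since $\tau\equiv 1$ and $\sigma$ takes values in roots of unity, $|\sigma|=|\tau|=1$, so by the preliminaries (\cite[Theorem 3]{Kac1}) $\mathscr{A}_l$ over $\mathbb{C}$ is a $C^*$-Hopf algebra. The Drinfel'd double of a finite dimensional $C^*$-Hopf algebra is again a $C^*$-Hopf algebra by a standard construction, in which every group-like element is a unitary and therefore satisfies $g^*=g^{-1}$. In particular $(\chi x)^*=(\chi x)^{-1}$, so the central group Hopf subalgebra $\langle\chi x\rangle\subseteq D(\mathscr{A}_l)$ is $*$-closed; consequently $I=D(\mathscr{A}_l)\langle\chi x\rangle^{+}$ is a $*$-Hopf ideal. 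Because any finite dimensional quotient of a $C^*$-algebra by a $*$-ideal is again a $C^*$-algebra, $D(\mathscr{A}_l)/I$ inherits the desired $C^*$-Hopf algebra structure.

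The main obstacle is this last step: the factorizability and dimension come for free from Proposition \ref{pro3.1.x}, but one has to trace the explicit involution through the Drinfel'd double and verify that the specific central group-like $\chi x$ is unitary with respect to it, so that $I$ is indeed a $*$-Hopf ideal. Once this is in hand, the descent of the $C^*$-structure to $D(\mathscr{A}_l)/I$ is automatic.
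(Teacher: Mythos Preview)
Your proposal is correct and follows essentially the same route as the paper. The paper's own proof is a single line: ``Since $I$ is Hopf $*$-ideal and Proposition \ref{pro3.1.x}, we get what we want''; you simply unpack both clauses, verifying explicitly that $\mathscr{A}_l$ fits the setup of Example \ref{ex2.1.x} so that Proposition \ref{pro3.1.x} applies, and spelling out why the central group-like $\chi x$ is unitary and hence why $I$ is a $*$-Hopf ideal, which the paper merely asserts.
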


\begin{proof}
Since $I$ is Hopf $*$-ideal and Proposition \ref{pro3.1.x}, we get what we want.
\end{proof}

Another family of ribbon factorizable Hopf algebra is constructed by using the $n$-rank Taft algebra $\overline{\mathscr{A}}_q(n)$. Let $\chi:\overline{\mathscr{A}}_q(n)\longrightarrow \Bbbk$ be the algebra map which is determined by $\chi(x_i)=0,\;\chi(g_i)=q$, where $1\leq i \leq n$. To use Theorem \ref{thm1.x}, we define $e_{(i_1,...,i_n),(j_1,...,j_{n-1})}\in \overline{\mathscr{A}}_q(n)$ as follows:
\begin{align*}
e_{(i_1,...i_n),(j_1,...j_{n-1})}&=\frac{1}{l^{2n-1}}\sum_{\substack{0\leq k_1,\dots,k_{n}\leq l-1\\   0\leq t_1,\dots,t_{n-1}\leq l-1}}(\prod\limits_{u=1}^{n}q^{i_uk_u})(\prod\limits_{v=1}^{n-1}q^{j_vt_v})(\prod\limits_{w=1}^{n}x_w^{k_w})(\prod\limits_{s=1}^{n-1}g_s^{t_s}),
\end{align*}
where $0\leq i_1,\dots,i_{n}\leq l-1$ and $0\leq j_1,\dots,j_{n-1}\leq l-1$. Since $q$ is primitive $l$-th root of unity, we have
\begin{align*}
(\prod\limits_{w=1}^{n}x_w^{k_w})(\prod\limits_{s=1}^{n-1}g_s^{t_s})&=\sum_{\substack{0\leq i_1,\dots,i_{n}\leq l-1\\   0\leq j_1,\dots,j_{n-1}\leq l-1}}(\prod\limits_{u=1}^{n}q^{-i_uk_u})(\prod\limits_{v=1}^{n-1}q^{-j_vt_v})e_{(i_1,...i_n),(j_1,...j_{n-1})}.
\end{align*}
Thus $\{(e_{(i_1,...i_n),(j_1,...j_{n-1})})g_n^{j_n}|\;0\leq i_1,\dots,i_{n}\leq l-1$ and $0\leq j_1,\dots,j_{n}\leq l-1\}$ is linear basis of $\overline{\mathscr{A}}_q(n)$. Define $M:=\{((i_1,...i_n),(j_1,...j_{n-1}))|\;0\leq i_1,\dots,i_{n}\leq l-1,\;0\leq j_1,\dots,j_{n-1}\leq l-1\}$.

\begin{proposition}\label{pro3.2.x}
If $l$ is odd, then the Hopf algebra $D(\overline{\mathscr{A}}_q(n))/I$ is factorizable Hopf algebra with dimension $l^{4n-1}$, where $I=D(\overline{\mathscr{A}}_q(n))\langle \chi g_n\rangle^{+}$.
\end{proposition}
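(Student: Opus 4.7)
The plan is to apply Theorem \ref{thm1.x} to $H=\overline{\mathscr{A}}_q(n)$ with the choice $x:=g_n$, the family $\{a_I\}_{I\in M}$ taken to be the Fourier idempotents $\{e_I\}_{I\in M}$ listed before the proposition, and $\chi$ the character just defined. The integer ``$n$'' appearing in Theorem \ref{thm1.x} is played here by $l$, which is odd by hypothesis, while the size of the index set is $m=|M|=l^{2n-1}$. Once the three conditions of the theorem are verified, the dimension assertion is immediate from $\dim\overline{\mathscr{A}}_q(n)=l^{2n}$, giving $\dim D(H)/I=l^{4n}/l=l^{4n-1}$.

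Conditions (i) and (iii) will be bookkeeping. For (i), the relation $g_n x_i=\theta(n,i)q^{\delta_{n,i}}x_i g_n=qx_i g_n$ holds for every $1\le i\le n$, so conjugation by $g_n$ multiplies each factor $x_w^{k_w}$ by $q^{k_w}$ and commutes with every $g_s^{t_s}$; re-indexing the Fourier sum then yields $g_n e_I g_n^{-1}=e_{\sigma(I)}$, where $\sigma$ shifts each $i_u$ by $+1\pmod l$ and fixes the $j_v$'s. The basis statement on $\{e_I g_n^{j_n}\}$ was already recorded just before the proposition. For (iii), $\chi$ is a well-defined algebra map because it annihilates every $x_i$ (so every $x$-relation collapses to $0=0$) and sends $g_i^l$ to $q^l=1$; then $\chi(g_n)=q$ is a primitive $l$-th root of unity and $|\chi|=l$ in $G(H^*)$ since $\chi^k(g_i)=q^k$.

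Condition (ii) is the main computational step and the only genuine obstacle. The key observation is that because $\chi$ is a character, both $h\mapsto h\leftharpoonup\chi$ and $h\mapsto\chi\rightharpoonup h$ are algebra endomorphisms of $H$, a direct consequence of $\chi$ being multiplicative together with $\Delta$ being an algebra map. Evaluating on generators gives $x_i\leftharpoonup\chi=qx_i$, $g_i\leftharpoonup\chi=qg_i$, $\chi\rightharpoonup x_i=x_i$, and $\chi\rightharpoonup g_i=qg_i$. Extending multiplicatively, a PBW monomial $(\prod x_w^{k_w})(\prod g_s^{t_s})$ is scaled by $q^{\sum_u k_u+\sum_v t_v}$ under $\leftharpoonup\chi$ and by $q^{\sum_v t_v}$ under $\chi\rightharpoonup$. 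Substituting into the explicit Fourier expression for $e_I$ and relabelling exponents then produces
\begin{align*}
e_I\leftharpoonup\chi=e_{\tau^{-1}(I)},\qquad \chi\rightharpoonup e_I=e_{\tau^{-1}\circ\sigma^{-1}(I)},
\end{align*}
where $\tau^{-1}$ is the simultaneous shift $i_u\mapsto i_u+1$, $j_v\mapsto j_v+1\pmod l$; the second identity follows from the first because composing with $\sigma^{-1}$ cancels exactly the shift in the $i_u$-coordinates, leaving only the $j_v$-shift, which matches the scaling observed for $\chi\rightharpoonup$. With (i)--(iii) in place, Theorem \ref{thm1.x} delivers that $\langle\chi g_n\rangle\subseteq D(H)$ is normal and that $(D(H)/I,\overline{\mathcal{R}})$ is factorizable, completing the proof modulo the routine exponent bookkeeping just sketched.
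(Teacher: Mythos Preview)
Your proposal is correct and follows essentially the same approach as the paper: both apply Theorem \ref{thm1.x} with $x=g_n$, $\{a_I\}=\{e_I\}_{I\in M}$, and the same permutations $\sigma$ (shift all $i_u$ by $+1$) and $\tau$ (shift all $i_u$ and $j_v$ by $-1$). Your explicit observation that $h\mapsto h\leftharpoonup\chi$ and $h\mapsto\chi\rightharpoonup h$ are algebra endomorphisms, together with the evaluation on generators, supplies exactly the computational detail the paper leaves to the reader when it asserts the formulas for $e_I\leftharpoonup\chi$ and $\chi\rightharpoonup e_I$.
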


\begin{proof}
We only need to prove $\overline{\mathscr{A}}_q(n)$ satisfies the conditions of Theorem \ref{thm1.x}. Define $\sigma,\tau:M\longrightarrow M$ by $\sigma(((i_1,...i_n),(j_1,...j_{n-1})))=((i_1+1,...i_n+1),(j_1,...j_{n-1}))$ and $\tau(((i_1,...i_n),(j_1,...j_{n-1})))=((i_1-1,...i_n-1),(j_1-1,...j_{n-1}-1))$ for $0\leq i_1,\dots,i_{n}\leq l-1,\;0\leq j_1,\dots,j_{n-1}\leq l-1$, here we agree that $-1=l-1$ and $l=0$. Then it can be seen that $g_n (e_{(i_1,...i_n),(j_1,...j_{n-1})})=e_{\sigma((i_1,...i_n),(j_1,...j_{n-1}))}$. Moreover, we have
\begin{align*}
(e_{(i_1,...i_n),(j_1,...j_{n-1})})\leftharpoonup \chi=e_{\tau^{-1}((i_1,...i_n),(j_1,...j_{n-1}))}
\end{align*}
and
\begin{align*}
\chi\rightharpoonup (e_{(i_1,...i_n),(j_1,...j_{n-1})})=e_{\tau^{-1}\circ \sigma^{-1}((i_1,...i_n),(j_1,...j_{n-1}))},
\end{align*}
where $0\leq i_1,\dots,i_{n}\leq l-1,\;0\leq j_1,\dots,j_{n-1}\leq l-1$. Hence the condition (ii) of Theorem \ref{thm1.x} holds. By definition, we have $|\chi|=l$ and $\chi(g_n)$ is primitive $l$-th root of unity. Thus (iii) of Theorem \ref{thm1.x} holds.
\end{proof}
\begin{remark}\label{rk1}
\emph{If $n=1$ then $D(\overline{\mathscr{A}}_q(n))/I$ is exactly the small quantum group $u_q(sl_2)$. If we adjust the parameter $l,n$, then $D(\overline{\mathscr{A}}_q(n))/I$ will be different from Drinfel'd double and small quantum groups due to the dimension reason. Hence we obtain large number of new pointed factorizable Hopf algebras $D(\overline{\mathscr{A}}_q(n))/I$.}
\end{remark}
To construct MTCs, we introduce the following result.
\begin{theorem}\label{thm4.x}
If $l$ is odd, then the representation category of $D(\overline{\mathscr{A}}_q(n))/I$ is non-semisimple modular tensor category.
\end{theorem}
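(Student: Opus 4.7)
The plan is to verify the four defining properties of a non-semisimple modular tensor category for $\Rep(D(\overline{\mathscr{A}}_q(n))/I)$: that it is finite abelian and rigid, that it admits a ribbon structure, that its braiding is non-degenerate, and that it is non-semisimple. Finiteness, abelian-ness and rigidity are automatic from $D(\overline{\mathscr{A}}_q(n))/I$ being a finite-dimensional Hopf algebra over $\Bbbk$. Non-degeneracy of the braiding is equivalent to factorizability of the underlying quasitriangular Hopf algebra, and this has already been established by Proposition \ref{pro3.2.x}.

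For the ribbon structure the plan is to invoke Theorem \ref{thm0.x}. First I would identify the distinguished grouplike elements $g \in G(\overline{\mathscr{A}}_q(n))$ and $\alpha \in G(\overline{\mathscr{A}}_q(n)^*)$ from explicit left/right integrals of the $n$-rank Taft algebra, which have the form of a monomial in the $x_i$ multiplied by a Fourier-type idempotent in the $g_i$. Because $l$ is odd, the integer $2$ is invertible modulo $l$, so both $g$ and $\alpha$ admit grouplike square roots $a \in G(\overline{\mathscr{A}}_q(n))$ and $\beta \in G(\overline{\mathscr{A}}_q(n)^*)$. The Kauffman--Radford identity $S^2(h) = a(\beta \rightharpoonup h \leftharpoonup \beta^{-1})a^{-1}$ then needs to be checked on the generators $x_i, g_i$ using the explicit antipode formula, which produces a ribbon element $\nu$ in $D(\overline{\mathscr{A}}_q(n))$. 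Since $\nu$ is central and the Hopf ideal $I$ is generated by the central grouplike $\chi g_n$, the image $\overline{\nu}$ in $D(\overline{\mathscr{A}}_q(n))/I$ is well-defined and still satisfies the ribbon axioms, giving the quotient a ribbon structure.

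Non-semisimplicity follows from the existence of nontrivial nilpotents: in $\overline{\mathscr{A}}_q(n)$ one has $x_i^l = 0$ but $x_i \neq 0$. The basis description of $D(\overline{\mathscr{A}}_q(n))/I$ furnished by Lemma \ref{lem3.1.x} (applied to $\overline{\mathscr{A}}_q(n)$ via the verification in Proposition \ref{pro3.2.x}) shows that the images of $x_1, \dots, x_n$ remain nonzero in the quotient, and they are still nilpotent since the defining relations in $\overline{\mathscr{A}}_q(n)$ descend. Hence the Jacobson radical of $D(\overline{\mathscr{A}}_q(n))/I$ is nontrivial, so its representation category is not semisimple.

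The main obstacle is the ribbon step: identifying the distinguished grouplikes of $\overline{\mathscr{A}}_q(n)$ explicitly and verifying the Kauffman--Radford conjugation formula on generators involves careful bookkeeping with the commutation relation $g_i x_j = \theta(i,j)q^{\delta_{i,j}} x_j g_i$ and the action of $\beta$ on the $x_j$. Assuming these four ingredients are assembled, the conclusion follows: the representation category is finite, rigid, ribbon, non-degenerate, and non-semisimple, hence a non-semisimple modular tensor category.
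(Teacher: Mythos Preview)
Your proposal is correct and follows essentially the same route as the paper: factorizability is Proposition~\ref{pro3.2.x}, and the ribbon structure comes from Theorem~\ref{thm0.x} after computing the distinguished grouplikes of $\overline{\mathscr{A}}_q(n)$ and extracting square roots via the oddness of $l$ (the paper finds $g=g_1^{-1}\cdots g_n^{-1}$, $\alpha(g_i)=q^{-n+2i}$, and takes $a=g^m$, $\beta=\alpha^m$ with $l=2m-1$). Your outline is in fact more complete than the paper's own proof, which neither justifies why the ribbon element passes to the quotient nor argues non-semisimplicity; your nilpotency argument via Lemma~\ref{lem3.1.x} fills that gap, though note that the descent of the ribbon element needs only that $I$ is a Hopf ideal, not the centrality you cite.
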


\begin{proof}
Denote $H$ as $\overline{\mathscr{A}}_q(n)$. By Proposition \ref{pro3.2.x}, we only need to show $(D(H),\mathcal{R})$ is ribbon Hopf algebra. Let $\Lambda=(\sum_{s\in G(H)}s)(x_1^{l-1}\dots x_n^{l-1})$. By definition $\Lambda$ is left integral of $H$. This implies $\alpha$ (the distinguished grouplike element of $H^*$) is determined by $\alpha(g_i)=q^{-n+2i}$ and $\alpha(x_i)=0$ for $1\leq i \leq n$. Similarly, one can get $g=g_1^{-1}\dots g_n^{-1}$ (the distinguished grouplike element of $H$). Assume $l=2m-1$. Let $a=g^m$ and $\beta=\alpha^m$. By definition, we know that $a, \beta$ satisfy the conditions of Theorem \ref{thm0.x}. Thus $(D(H),\mathcal{R})$ is ribbon Hopf algebra.
\end{proof}

\subsection{A family of unitary modular tensor categories.}
In this subsection, we will determine the fusion rings of unitary modular tensor categories which are arising from a family of $C^*$-Hopf algebras. We assume $\Bbbk=\mathbb{C}$ in this subsection. Recall that $C^*$-Hopf algebras $\mathscr{A}_l$ are given by Example \ref{ex2.1.5}. Since Corollary \ref{cor4.2.x}, we know that the Hopf algebras $D(\mathscr{A}_l)/I$ are factorizable $C^*$-Hopf algebra with dimension $p^2q^3$, where $I=D(\mathscr{A}_l)\langle \chi x\rangle^{+}$. These Hopf algebras are not Drinfel'd double since the dimension reason. Hence a large number of new unitary modular tensor categories are gotten. Next, we will only consider the case $D(\mathscr{A}_0)/I$ since the other cases can be done in a similar way. Denote $A_{p,q}$ as $D(\mathscr{A}_0)/I$ for simplicity. Next we study the unitary modular tensor category arising from $A_{p,q}$. To do this, we first describe $A_{p,q}$ as follows:

\begin{theorem}\label{thm2.x}
The $C^*$-Hopf algebra $A_{p,q}$ is generated by $\{x,y,z_i,e_g|\;1\leq i \leq q,\;g\in G\}$ as algebra, with the relations
$$
yx=xy^t,\;z_ix=xz_i,\;e_gx=xe_{g\triangleleft x},\;z_iy=yz_i,$$

$$
 e_gy=y\sum_{1\leq i\leq q} z_i (e_{a^{-1}\triangleleft x^i g a}),\;e_gz_i=z_ie_g,\;x^q=y^p=1,\;z_iz_j=\delta_{i,j}z_i,\;e_ge_h=\delta_{g,h}e_g.$$
The coproduct, counit, involution and antipode are given by
$$
\Delta(x) = x\otimes x, \;\Delta(y) =\sum_{1\leq i\leq q} y^{t^i}\otimes yz_i,
$$

$$
\Delta(z_i) =\sum_{1\leq j\leq q} z_j \otimes z_{i-j},\;\Delta(e_g) =\sum_{h\in G} e_h \otimes e_{h^{-1}g},
$$

$$
\epsilon(x)=\epsilon(y)=1, \;\epsilon(z_i)=\delta_{i,0},\;\epsilon(e_g)=\delta_{g,1},
$$

$$
x^* =x^{-1},\; y^* =y^{-1}, \;z_i^{*} =z_{i},\;e_g^* =e_{g}.
$$

$$
S(x) =x^{-1},\; S(y) =\sum_{1\leq i\leq q}y^{-t^{-i}}z_i, \;S(z_i) =z_{-i},\;S(e_g) =e_{g^{-1}}.
$$
\end{theorem}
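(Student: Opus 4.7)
The plan is to realize the generators $x,y,z_i,e_g$ explicitly inside $D(\mathscr{A}_0)/I$, verify the listed relations by direct computation with the Drinfel'd double multiplication, and close with a dimension count matching $\dim A_{p,q}=p^2q^3$ established in Corollary \ref{cor4.2.x}.

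First I would make $\mathscr{A}_0^*$ concrete. Because $\sigma,\tau$ and $\triangleright$ are all trivial in the data of $\mathscr{A}_0$, the comultiplication of $\mathscr{A}_0$ collapses to $\Delta(e_g\#x^i)=\sum_{g'g''=g}(e_{g'}\#x^i)\otimes(e_{g''}\#x^i)$, and dualizing yields $\mathscr{A}_0^*\cong \mathbb{C}G\otimes\mathbb{C}^F$ with product $(\delta^h\otimes\epsilon^j)(\delta^{h'}\otimes\epsilon^{j'})=\delta_{j,j'}\,\delta^{hh'}\otimes\epsilon^j$, where $\{\delta^h\otimes\epsilon^j\}$ is dual to $\{e_h\#x^j\}$. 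Inside $\mathscr{A}_0^*$ the natural candidates are $y:=\sum_j\delta^a\otimes\epsilon^j$ and $z_i:=\delta^1\otimes\epsilon^i$; since $a^p=1$ in $G$ one gets $y^p=1$, and $\sum_i z_i=\epsilon_{\mathscr{A}_0}=1$. A short calculation identifies $\chi=\sum_i\omega^i\delta^b\otimes\epsilon^i$, so in $D(\mathscr{A}_0)/I$ (where $\chi x=1$ by construction of $I$; cf.\ Lemma \ref{lem3.1.x}) the companion element $\sum_j\delta^b\otimes\epsilon^j$ is recovered as $x^{-1}\sum_i\omega^{-i}z_i$. Hence no further generators from $\mathscr{A}_0^*$ are needed.

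The second step is to verify each relation. The intra-$\mathscr{A}_0$ relations (the $e_ge_h$, $e_gx=xe_{g\triangleleft x}$, $x^q=1$ relations) follow directly from Definition \ref{def2.1.2}, while the intra-$\mathscr{A}_0^*$ relations ($z_iz_j=\delta_{i,j}z_i$, $z_iy=yz_i$, $y^p=1$) follow from the dualized product above. The cross relations come from the Drinfel'd double formula $(f\otimes h)(g\otimes k)=f[h_{(1)}\rightharpoonup g\leftharpoonup S^{-1}(h_{(3)})]\otimes h_{(2)}k$. For $xy$ this yields $xy=(x\rightharpoonup y\leftharpoonup x^{-1})\otimes x$, and since $x^{-1}(e_g\#x^i)x=e_{g\triangleleft x}\#x^i$ one finds $x\rightharpoonup y\leftharpoonup x^{-1}=y^{t^{-1}}$, rearranging to $yx=xy^t$. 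For $e_gy$ one expands with the twisted coproduct of $y$ in $(\mathscr{A}_0^*)^{cop}$ together with the $\rightharpoonup,\leftharpoonup$ actions of $\mathscr{A}_0$ on $\mathscr{A}_0^*$; the shifts $a\triangleleft x^i=a^{t^i}$ enter precisely to produce the factor $e_{a^{-1}\triangleleft x^i g a}$. The coproducts, counit, involution and antipode of each generator are then transported from the inherited structures on $\mathscr{A}_0$ and $(\mathscr{A}_0^*)^{cop}$; in particular $\Delta(y)=\sum_i y^{t^i}\otimes yz_i$ is the dualization of $(e_a\#x^i)(e_{a'}\#x^{i'})=\delta_{a\triangleleft x^i,a'}\,e_a\#x^{i+i'}$, and the $*$-structure descends from the Kac involution on $\mathscr{A}_0$ together with its dual on $(\mathscr{A}_0^*)^{cop}$.

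Finally, the relations let one put every monomial in normal form $e_g y^j z_k x^i$, giving at most $(pq)\cdot p\cdot q\cdot q=p^2q^3$ independent elements, which equals $\dim A_{p,q}$ from Corollary \ref{cor4.2.x}; hence the listed presentation is complete. The main obstacle is the bookkeeping for $e_gy$: the twisted coproduct of $y$ in $(\mathscr{A}_0^*)^{cop}$ must be tracked simultaneously with the left and right $\mathscr{A}_0$-actions on $\mathscr{A}_0^*$ and with the index shifts $a\triangleleft x^i=a^{t^i}$, and it is here that the semidirect-product structure of $G=\mathbb{Z}_p\rtimes\mathbb{Z}_q$ enters the computation most sensitively.
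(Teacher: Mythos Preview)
Your proposal is correct and follows essentially the same route as the paper: the paper also sets $y=\sum_j E_{a;x^j}$, $z_i=E_{1;x^i}$, $\chi=\sum_j\omega^jE_{b;x^j}$ in $\mathscr{A}_0^*$ (your $\delta^h\otimes\epsilon^j$ is exactly the paper's $E_{h;x^j}$), uses $\chi=x^{-1}$ in the quotient to eliminate the extra generator, and verifies the cross relations --- deriving $yx=xy^t$ and $z_ix=xz_i$ from $\chi Y=Y^t\chi$, $\chi Z_i=Z_i\chi$ in $\mathscr{A}_0^*$ rather than directly from the double formula, and computing $e_gY$ in the double just as you describe. Your closing dimension count is a clean way to certify completeness of the presentation that the paper leaves implicit.
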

To show above theorem, we introduce some lemmas as follows. Assume $\{E_{g;x^i}|\;g\in G, 1\leq i\leq q\}$ is the dual basis of $\{e_{g}x^i|\;g\in G, 1\leq i\leq q\}$ for $\mathscr{A}_0^*$, i.e. $E_{g;x^i}(e_h x^j)=\delta_{g,h}\delta_{i,j}$ for $g,h\in G$ and $1\leq i,j\leq q$. Let $\omega$ be a primitive $q$-th root of unity. To find algebraic generator of $\mathscr{A}_0^*$, we define $Y,\;Z_i(1\leq i\leq q),\;\chi$ as follows: $$Y=\sum_{1\leq j\leq q}E_{a;x^j},\;Z_i=E_{1,x^i},\;\chi=\sum_{1\leq j\leq q}\omega^jE_{b;x^j}.$$

\begin{lemma}\label{lem3.2.x}
The following statements hold for $\mathscr{A}_0^*$
\begin{itemize}
  \item[(i)] $(E_{g;x^i})(E_{h;x^j})=\delta_{i,j}E_{gh;x^i}$,
  \item[(ii)]  $\Delta(E_{g;x^i})=\sum_{1\leq j\leq q} E_{g;x^j}\otimes E_{g\triangleleft x^j;x^{i-j}}$,
  \item[(iii)]  $(E_{g;x^i})^\ast=E_{g^{-1};x^i}$, where $\ast$ is the involution.
  \item[(iv)] $E_{a^ib^j;x^k}=\omega^{-jk}Y^i \chi^j Z_k$.
\end{itemize}
\end{lemma}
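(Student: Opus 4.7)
The plan is to prove the four statements by direct computation using the duality between $\mathscr{A}_0$ and $\mathscr{A}_0^*$. Crucially, for $\mathscr{A}_0$ the parameter $l=0$ makes the cocycle $\sigma$ trivial (since $\sigma(a^ib^j,x^m,x^n)=\omega^{jlq_{mn}}=1$), while $\tau=1$ and $\triangleright$ is trivial by Example \ref{ex2.1.5}. Consequently the product and coproduct of $\mathscr{A}_0$ simplify to
\begin{equation*}
(e_{g}x^{i})(e_{h}x^{j})=\delta_{g\triangleleft x^{i},h}\,e_{g}x^{i+j},\qquad \Delta(e_{g}x^{i})=\sum_{g'g''=g}e_{g'}x^{i}\otimes e_{g''}x^{i},
\end{equation*}
with the action $g\triangleleft x^{j}=b^{j}gb^{-j}$ coming from Example \ref{ex2.1.x}(ii). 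These two formulas do essentially all of the work.

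For (i), the product $(E_{g;x^{i}})(E_{h;x^{j}})$ in $\mathscr{A}_0^*$ is by definition $(E_{g;x^{i}}\otimes E_{h;x^{j}})\circ\Delta$. I would apply this to a basis element $e_{k}x^{l}$ and use the coproduct formula above to read off $\delta_{i,l}\delta_{j,l}\delta_{g,k'}\delta_{h,k''}$ with $k'k''=k$, which collapses to $\delta_{i,j}E_{gh;x^{i}}(e_{k}x^{l})$. For (ii), dually, I would evaluate $\Delta(E_{g;x^{i}})$ at $e_{k_{1}}x^{l_{1}}\otimes e_{k_{2}}x^{l_{2}}$ using the product formula above, obtaining $\delta_{g,k_{1}}\delta_{g\triangleleft x^{l_{1}},k_{2}}\delta_{i,l_{1}+l_{2}}$, and then check the claimed right-hand side produces the same value; the only point to notice is that the sum $\sum_{j}$ collapses on the single term $j=l_{1}$.

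For (iii), I would use the standard formula for the dual involution on a $C^{*}$-Hopf algebra, namely $\phi^{*}(a)=\overline{\phi(S(a)^{*})}$. The antipode formula of Definition \ref{def2.1.2}, together with the simplifications above, gives $S(e_{h}x^{j})=e_{(h\triangleleft x^{j})^{-1}}x^{-j}$, and applying the involution of $\mathscr{A}_0$ yields $S(e_{h}x^{j})^{*}=e_{(h\triangleleft x^{j})^{-1}\triangleleft x^{-j}}\,x^{j}$. The key simplification is that $\triangleleft x^{j}$ is the automorphism $g\mapsto b^{j}gb^{-j}$, hence a group automorphism, so $(h\triangleleft x^{j})^{-1}\triangleleft x^{-j}=h^{-1}$. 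Substituting gives $(E_{g;x^{i}})^{*}(e_{h}x^{j})=\overline{\delta_{g,h^{-1}}\delta_{i,j}}=E_{g^{-1};x^{i}}(e_{h}x^{j})$.

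For (iv), I would apply (i) repeatedly: because the product in (i) is zero on mixed $x$-degrees, one has $Y^{i}=\sum_{l}E_{a^{i};x^{l}}$ and $\chi^{j}=\sum_{l}\omega^{jl}E_{b^{j};x^{l}}$, while $Z_{k}=E_{1;x^{k}}$. Multiplying and using (i) once more collapses the sums on the matching $x$-power $x^{k}$ and produces the scalar $\omega^{jk}$, so that $Y^{i}\chi^{j}Z_{k}=\omega^{jk}E_{a^{i}b^{j};x^{k}}$, which is equivalent to the claim. I do not foresee a serious obstacle: the only subtle point is the verification that $\triangleleft x^{j}$ is an automorphism used in (iii), but this is immediate from the matched-pair axiom with $\triangleright$ trivial.
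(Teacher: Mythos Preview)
Your proposal is correct and follows essentially the same route as the paper: both compute (i)--(iv) directly by pairing dual-basis elements against the simplified product and coproduct of $\mathscr{A}_0$ (with $\sigma=\tau=1$ and trivial $\triangleright$), and both obtain (iv) by multiplying the expressions for $Y^i$, $\chi^j$, $Z_k$ using (i). The only difference is cosmetic: for (iii) the paper simply asserts $S(e_hx^j)^*=e_{h^{-1}}x^j$, whereas you spell out the intermediate step $(h\triangleleft x^j)^{-1}\triangleleft x^{-j}=h^{-1}$ via the fact that $\triangleleft x$ is conjugation by $b$.
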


\begin{proof}
By definition of $\mathscr{A}_0$, we have $\Delta(e_kx^l)=\sum_{u\in G}e_ux^l\otimes e_{u^{-1}k}x^l$. This implies $(E_{g;x^i}E_{h;x^j})(e_kx^l)=\delta_{i,j}\delta_{gh,k}\delta_{i,l}$. Hence we have (i).

Since $(e_hx^j)(e_kx^l)=(\delta_{h,k\triangleleft x^{-j}}e_h x^{j+l})$, we get
$E_{g;x^i}[(e_hx^j)(e_kx^l)]=\delta_{h,k\triangleleft x^{-j}}\delta_{g,h}\delta_{i,j+l}$. This implies (ii).

By definition, we have $E_{g;x^i}^{*}(e_hx^j)=\overline{E_{g;x^i}[S(e_hx^j)^*]}$. Since $S(e_hx^j)^*=e_{h^{-1}}x^j$, we get (iii).

Due to (i), we know $Y^i \chi^j Z_k=\omega^{jk}E_{a^ib^j;x^k}$. And so (iv) holds.
\end{proof}
As a corollary of Lemma \ref{lem3.2.x}, we have
\begin{corollary}\label{lem3.3.x}
The $C^*$-Hopf algebra $\mathscr{A}_0^*$ is generated by $\{Y,Z_i,\chi|\;1\leq i\leq q\}$ as algebra, with relations
$$
Z_iY=YZ_i,\;\chi Y=Y^t \chi,\;\chi Z_i=Z_i\chi,\;Y^p=\chi^q=1,\;Z_iZ_j=\delta_{i,j}Z_i,$$
The coproduct, counit, involution and antipode are given by
$$
\Delta(Y) = \sum_{1\leq i\leq q} YZ_i\otimes Y^{t^i}, \;\Delta(Z_i) =\sum_{1\leq j\leq q} Z_j\otimes Z_{i-j},\;\Delta(\chi) =\chi\otimes \chi,
$$

$$
\epsilon(Y)=\epsilon(\chi)=1, \;\epsilon(Z_i)=\delta_{i,0},
$$

$$
Y^* =Y^{-1}, \;Z_i^{*} =Z_{i},\;\chi^* =\chi^{-1},
$$

$$
S(Y) =\sum_{1\leq i\leq q}Y^{-t^{-i}}Z_i, \;S(Z_i) =Z_{-i},\;S(\chi) =\chi^{-1}.
$$
\end{corollary}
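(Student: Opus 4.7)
The strategy is to derive every assertion directly from Lemma \ref{lem3.2.x}, treating $Y,\chi,Z_1,\dots,Z_q$ as shorthand for the combinations of dual basis elements $E_{g;x^i}$ they were defined to be. By Lemma \ref{lem3.2.x}(iv) each dual basis element $E_{a^ib^j;x^k}$ is a monomial in $Y,\chi,Z_k$, so $\{Y,\chi,Z_1,\dots,Z_q\}$ automatically generates $\mathscr{A}_0^*$ as an algebra. The unit of $\mathscr{A}_0^*$ is the counit of $\mathscr{A}_0$, which in the dual basis equals $\sum_{j=1}^q E_{1;x^j}=\sum_i Z_i$; with this identification the relations $Y^p=1$ and $\chi^q=1$ fall out of part (i) of the lemma, since $(\sum_j E_{a;x^j})^p=\sum_j E_{a^p;x^j}=\sum_j E_{1;x^j}$ and similarly for $\chi^q$ using $\omega^{qj}=1$.

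The remaining algebra relations are one-line verifications from Lemma \ref{lem3.2.x}(i). For instance, $\chi Y=\sum_j\omega^j E_{b;x^j}\sum_k E_{a;x^k}=\sum_j\omega^j E_{ba;x^j}$ collapses to $Y^t\chi$ once one substitutes $ba=a^tb$ in $G$; the commutation relations $Z_iY=YZ_i$ and $\chi Z_i=Z_i\chi$, together with $Z_iZ_j=\delta_{i,j}Z_i$, are analogous and even simpler. The coproduct identities come from Lemma \ref{lem3.2.x}(ii) after inserting $a\triangleleft x^k=a^{t^k}$, $1\triangleleft x^k=1$, $b\triangleleft x^k=b$ and reassembling the summands via (iv). The counit values are read off by evaluating at $1_{\mathscr{A}_0}=\sum_g e_g x^q$. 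The $\ast$-identities follow from Lemma \ref{lem3.2.x}(iii) together with the easy checks $Y^{-1}=\sum_j E_{a^{-1};x^j}$ and $\chi^{-1}=\sum_j\omega^{-j}E_{b^{-1};x^j}$, which exhibit $Y^*=Y^{-1}$ and $\chi^*=\chi^{-1}$ in the required form.

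For the antipode I would specialise the general formula of Definition \ref{def2.1.2} to the data of Example \ref{ex2.1.5} with $l=0$, where $\sigma=1$, $\tau=1$ and the action $\triangleright$ is trivial; this collapses to $S_{\mathscr{A}_0}(e_gx^i)=e_{(g\triangleleft x^i)^{-1}}x^{-i}$. Dualising via $S_{\mathscr{A}_0^*}(f)(h)=f(S_{\mathscr{A}_0}(h))$ and evaluating on $\{e_gx^i\}$ yields $S(Y)(e_gx^i)=\delta_{g,a^{-t^{-i}}}$, which equals $(\sum_i Y^{-t^{-i}}Z_i)(e_gx^i)$ by Lemma \ref{lem3.2.x}(iv); the formulas for $S(Z_i)$ and $S(\chi)$ come out by the same mechanism. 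The main obstacle here is purely bookkeeping: the powers of $x$ in the chosen basis are indexed by $\{1,\dots,q\}$ with $x^q=1$, while the exponents of $t$ and the indices on $Z_i$ are naturally taken modulo $q$, so one must consistently identify $q$ with $0$ throughout so that the asserted identities $S(Z_i)=Z_{-i}$ and $\varepsilon(Z_i)=\delta_{i,0}$ line up exactly with what the literal calculation produces.
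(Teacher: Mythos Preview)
Your proposal is correct and follows essentially the same approach as the paper: both derive everything from Lemma \ref{lem3.2.x}, singling out $\chi Y=Y^t\chi$, $\Delta(Y)$, and $S(Y)$ as the non-trivial checks and handling them via the group relation $ba=a^tb$, the action $a\triangleleft x^k=a^{t^k}$, and dualising the antipode of $\mathscr{A}_0$. Your treatment is slightly more explicit in spelling out the antipode formula $S_{\mathscr{A}_0}(e_gx^i)=e_{(g\triangleleft x^i)^{-1}}x^{-i}$ from Definition \ref{def2.1.2} before dualising, but this is the same computation the paper performs ``by definition''; just be careful with the index collision in your displayed formula for $S(Y)(e_gx^i)$, where you reuse $i$ as both the fixed exponent and the summation variable.
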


\begin{proof}
By Lemma \ref{lem3.2.x}, the only non-trivial things are the following equalities:
$$\chi Y=Y^t \chi,\;\Delta(Y) = \sum_{1\leq i\leq q} YZ_i\otimes Y^{t^i},\;S(Y) =\sum_{1\leq i\leq q}Y^{-t^{-i}}Z_i.$$

Since Lemma \ref{lem3.2.x}, we have $\chi Y=\sum_{1\leq i\leq q}\omega^i E_{ba;x^i}$ and $Y^t\chi=\sum_{1\leq i\leq q}\omega^i E_{a^tb;x^i}$. By definition of $G$, $ba=a^tb$ and hence $\chi Y=Y^t \chi$.

Using Lemma \ref{lem3.2.x} again, we have $\Delta(Y) = \sum_{1\leq i,j\leq q} E_{a;x^j}\otimes E_{a^{t^j};x^{i-j}}$. Note that $E_{a;x^j}=YZ_j$ and $E_{a^{t^j};x^{i-j}}=Y^{t^j}Z_{i-j}$ by Lemma \ref{lem3.2.x}, thus $\Delta(Y) = \sum_{1\leq i,j\leq q} YZ_j\otimes Y^{t^j}Z_{i-j}$. Since $\sum_{1\leq j\leq q}Z_{i-j}=1$, we know $\Delta(Y) = \sum_{1\leq i\leq q} YZ_i\otimes Y^{t^i}$.

By definition, $E_{a;x^i}(S(e_g x^j))=E_{a;x^i}(e_{g\triangleleft x^j} x^{-j})=\delta_{g,a\triangleleft x^i}\delta_{j,-i}$. Hence $S(E_{a;x^i})=E_{a\triangleleft x^i;x^{-i}}$. Note that $E_{a\triangleleft x^i;x^{-i}}=Y^{t^i}Z_{-i}$, thus $S(Y) =\sum_{1\leq i\leq q}Y^{-t^{-i}}Z_i$.
\end{proof}
Next, we show Theorem \ref{thm2.x}. For simple, we write $a$ as $a+I$ in $A_{p,q}$.

\textbf{Proof of Theorem \ref{thm2.x}.}
Denote $y=Y$ and $z_i=Z_i$. By definition of $A_{p,q}$, we have $\chi=x^{-1}$. Combine this with Corollary \ref{lem3.3.x}, we know $A_{p,q}$ is generated by $\{x,y,z_i,e_g|\;1\leq i \leq q,\;g\in G\}$ as algebra. Since Corollary \ref{lem3.3.x} and the definition of $\mathscr{A}_0$, we only need to prove the following equalities:
$$yx=xy^t,\;z_ix=xz_i,\;e_gy=y\sum_{1\leq i\leq q} z_i (e_{a^{-1}\triangleleft x^i g a}),\;e_gz_i=z_ie_g.$$

By Corollary \ref{lem3.3.x}, we have $\chi Y=Y^t \chi$ and $Z_i \chi= \chi Z_i$. Since $\chi=x^{-1}$ in $A_{p,q}$, we get $yx=xy^t,\;z_ix=xz_i$. By definition of $A_{p,q}$, we get
$$e_g Y=\sum_{1\leq i\leq q}\sum_{hkl=g}(e_h\rightharpoonup E_{a;x^i}\leftharpoonup e_{l^{-1}})e_k.$$
Since $(e_h\rightharpoonup E_{a;x^i}\leftharpoonup e_{l^{-1}})(e_s x^r)=\delta_{s,a}\delta_{r,i}\delta_{l,a^{-1}}\delta_{h,a\triangleleft x^i}$, we get $(e_h\rightharpoonup E_{a;x^i}\leftharpoonup e_{l^{-1}})=\delta_{l,a^{-1}}\delta_{h,a\triangleleft x^i}E_{a;x^i}$. Note that $E_{a;x^i}=YZ_i$, hence $e_gy=y\sum_{1\leq i\leq q} z_i (e_{a^{-1}\triangleleft x^i g a})$. Similarly, we have $e_gZ_i=Z_ie_g$. And so $e_gz_i=z_ie_g$.
\qed

Next, we determine the fusion ring of $Rep(A_{p,q})$ which is the $C^*$-tensor category of representations of $A_{p,q}$ on finite dimensional Hilbert spaces. Define $B=\langle e_g,z_i|\;g\in G,1\leq i\leq q\rangle$ as algebra. Note that $B$ is actually diagonal algebra, hence the representation category of $B$ is very simple. To determine representations of $A_{p,q}$, our idea is to use the representation category of $B$ and add some additional conditions.

Assume $\omega$(resp. $\eta$) is primitive $q$th(resp. $p$th) root of unity in the following content. Since the definition of $\mathscr{A}_0$, we can assume $p-1=mq$ and $\mathbb{Z}_p^\times=\langle \beta \rangle$ as group, where $\mathbb{Z}_p^\times$ is the multiplicative $\mathbb{Z}_p-\{0\}$. Recall that $t^q=1$, so we can assume $t=\beta^m$. And this implies $\mathbb{Z}_p^\times=\cup_{i=1}^m \beta^i \langle t\rangle$, where $\langle t\rangle$ is the multiplicative subgroup of $\mathbb{Z}_p^\times$. Then we construct four series of simple representations on Hilbert spaces as follows:
\begin{enumerate}
\item[(1)] $T_{i,j}=\langle u \rangle,\;1\leq i,j\leq q$ as Hilbert space: the action of $A_{p,q}$ is determined by
\begin{align*}
x.u=\omega^j u,\;y.u=u,
\end{align*}
\begin{align*}
z_k.u=\delta_{k,i}u,\;e_{g}.u= \delta_{g,b^i}u,\;0\leq k\leq q-1;
\end{align*}
\item[(2)] $U_{i',j'}=\langle u_k|\;0\leq k\leq q-1 \rangle$ as Hilbert space $,1\leq i'\leq q,\;j'\in \{\beta^1, \dots , \beta^{m}\}$: the action of $A_{p,q}$ is determined by
\begin{align*}
x.u_k=u_{k+1},\;y.u_k=\eta^{j't^k}u_k,
\end{align*}
\begin{align*}
z_l.u_{k}=\delta_{l,i'}u_k,\;e_{g}.u_k= \delta_{g,b^{i'}}u_k,\;0\leq k,l\leq q-1;
\end{align*}
\item[(3)] $V_{(i'',j'',k')}=\langle u_l|\;0\leq l\leq q-1 \rangle$ as Hilbert space, where $i''\in \{\beta^1, \dots , \beta^{m}\},\;1\leq j''\leq q,\;1\leq k'\leq p$: the action of $A_{p,q}$ is determined by
\begin{align*}
x.u_l=u_{l-1},\;y.u_l=\eta^{k't^{-l}}u_l,
\end{align*}
\begin{align*}
z_s.u_{l}=\delta_{s,j''}u_l,\;e_{g}.u_l= \delta_{g,a^{i''t^l}b^{j''}}u_l,\;0\leq l,s\leq q-1;
\end{align*}

\item[(4)] $W_{(i''',j''',k'')}=\langle v_l|\;0\leq l\leq p-1 \rangle$ as Hilbert space, where $1\leq i'''\neq j'''\leq q,\;1\leq k''\leq q$: the action of $A_{p,q}$ is determined by
\begin{align*}
x.v_l=\omega^{k''} v_{t^{-1}l},\;y.v_l=v_{l+1},\;0\leq l\leq p-1
\end{align*}
\begin{align*}
z_s.v_{l}=\delta_{s,j'''}v_l,\;e_{g}.v_l= \delta_{g,a^{l(t^{j'''}-t^{i'''})}b^{i'''}}v_l,\;0\leq s\leq q-1;
\end{align*}
\end{enumerate}
The inner products on above Hilbert spaces are assumed to be standard, for example $W_{(i''',j''',k'')}=\langle v_l|\;0\leq l\leq p-1 \rangle$ and $\langle v_i,v_j\rangle=\delta_{i,j}$ for $0\leq i,j\leq p-1$. To collect above modules, we define $M=\{T_{i,j},\;U_{i',j'},\;V_{(i'',j'',k')},\;W_{(i''',j''',k'')}|\;1\leq i,j,i',j'',k''\leq q,\;1\leq i'''\neq j'''\leq q,\;j',i''\in \{\beta^1, \dots , \beta^{m}\}\}$. Denote the fusion ring of $Rep(A_{p,q})$ as $K(p,q)$. Then we have
\begin{theorem}\label{thm3.x}
The rank of $K(p,q)$ is $q^2(p^2+q-1)$ and $M$ is a basis of $K(p,q)$. Moreover, the dimensional function of $K(p,q)$ is given as follows:
$$\dim(T_{i,j})=1,\;\dim(U_{i',j'})=\dim(V_{(i'',j'',k')})=q,\;\dim(W_{(i''',j''',k'')})=p,$$
and the unity is $T_{0,0}$, the multiplication of $K(p,q)$ is given as follows:
\begin{align*}
&T_{i,j}.T_{i',j'}=T_{i+i',j+j'},\;T_{i,j}.U_{i'',j''}=U_{i+i'',j''},\\
&T_{i,j}.V_{(k,l,s)}=V_{(k,i+l,t^is)},\;T_{i,j}.W_{(k',l',s')}=W_{(i+k',i+l',j+s')},\\
&U_{i'',j''}.U_{i''',j'''}=\sum_{s=0}^{q-1}U(i''+i''',\alpha_s),\; \text{if } j'''\notin -j''\langle t\rangle, \text{here}\\
&j''t^{i'''}+j'''t^s=\alpha_s t^{\beta_s} \text{for }
0\leq s\leq q-1 \;\text{and}\;\alpha_s \in \{\beta^1, \dots , \beta^{m}\},\\
&U_{i'',j''}.U_{i''',j'''}=\sum_{s=0,s\neq i_0}^{q-1}U(i''+i''',\alpha_s)+\sum_{s=0}^{q-1}T(i''+i''',s),\;\text{where }\\
&j''t^{i'''}+j'''t^{i_0}=0 \;\text{for some } 0\leq i_0\leq q-1,\\
&U_{i'',j''}.V_{(i''',j''',k'')}=\sum_{s=0}^{q-1}V_{(i''',i''+j''',t^{i''}(k''+j''t^{j'''+s}))},\\
&U_{i'',j''}.W_{(i''',j''',k'')}=\sum_{s=0}^{q-1}W_{(i''+i''',i''+j''',s)},\\
&V_{(i'',j'',k')}.V_{(i''',j''',k'')}=\sum_{s=0}^{q-1}V_{(\alpha_s',j''+j''',t^{\beta_s'}(k't^{j'''}+k''t^{-s}))}
\text{ if } i''\notin -i'''\langle t\rangle\;\text{and } \\
&i''+i'''t^{s+j''}=\alpha_s' t^{\beta_s'} \;\text{where } \alpha_s'\in \{\beta^1\dots \beta^m\}.
\end{align*}
\begin{align*}
&V_{(i'',j'',k')}.V_{(i''',j''',k'')}=\sum_{s=0,s\neq s_0}^{q-1}V_{(\alpha_s',j''+j''',t^{\beta_s'}(k't^{j'''}+k''t^{-s}))}+\sum_{s=0}^{q-1}T_{j''+j''',s},
\text{ if} \\
&i''+i'''t^{s_0+j''}=0 \text{ for some } 0\leq s_0\leq q-1 \text{ and } k't^{j'''}+k''t^{-s_0}=0,\\
&V_{(i'',j'',k')}.V_{(i''',j''',k'')}=\sum_{s=0,s\neq s_0}^{q-1}V_{(\alpha_s',j''+j''',t^{\beta_s'}(k't^{j'''}+k''t^{-s}))}+U_{j''+j''',\alpha_{s_0}''}\text{ if }\\
&i''+i'''t^{s_0+j''}=0 \text{ for some } 0\leq s_0\leq q-1 \text{ and }k't^{j'''}+k''t^{-s_0}=\alpha_{s_0}'' t^{\beta_{s_0}''},\\
&\text{where } \alpha_s''\in \{\beta^1\dots \beta^m\}.
\end{align*}
\begin{align*}
&V_{(i'',j'',k')}.W_{(i''',j''',k'')}=\sum_{s=0}^{q-1}W_{(j''+i''',j''+j''',s)},\\
&W_{(i,j,k)}.W_{(i',j',k')}=\sum_{s=0}^{q-1}mW_{(i+i',j+j',s)}+W_{(i+i',j+j',k+k')} \text{ if } i+i'\neq j+j',\\
&W_{(i,j,k)}.W_{(i',j',k')}=T_{i+i',k+k'}+\sum_{s=1}^m U_{i+i',\beta^s}+\sum_{l=0}^{p-1}\sum_{s=0}^{q-1}V_{(l,i+i',s)} \text{ if } i+i'=j+j'.
\end{align*}

\end{theorem}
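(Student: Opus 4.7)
The plan is to prove the theorem in three stages: show each module in $M$ is an irreducible $A_{p,q}$-module and that they are pairwise non-isomorphic; verify $M$ is complete by a dimension count; and decompose tensor products case by case.

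For irreducibility, well-definedness of each construction is a direct check of the relations from Theorem \ref{thm2.x}; for example, on $U_{i',j'}$ the relation $yx=xy^{t}$ reduces to $\eta^{j' t^{k+1}}=\eta^{j'\cdot t\cdot t^{k}}$. The key structural observation is that the subalgebra $B=\langle e_g,z_i\mid g\in G, 1\le i\le q\rangle$ is commutative and acts diagonally in each presentation, splitting the module into one-dimensional $B$-weight spaces indexed by pairs $(g,i)\in G\times\mathbb{Z}_q$; inspection of the formulas shows that $x$ and $y$ permute these lines in a single orbit, ruling out proper submodules. Non-isomorphism between distinct members of $M$ is then clear by comparing their $B$-weight multisets, which differ by construction. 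Since $A_{p,q}$ is a $C^*$-Hopf algebra (Corollary \ref{cor4.2.x}) it is semisimple, so $\dim A_{p,q}=p^{2}q^{3}$ equals the sum of squares of dimensions of its irreducibles. With $qm=p-1$, a short computation
$$q^{2}\cdot 1+qm\cdot q^{2}+mqp\cdot q^{2}+q^{2}(q-1)\cdot p^{2}=p^{2}q^{3}$$
confirms that $M$ exhausts the irreducibles, so $M$ is a basis of $K(p,q)$, and the listed dimensions are read off the presentations.

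For the fusion products, the strategy is to exploit the coproduct in Theorem \ref{thm2.x}. Because $\Delta(e_g)=\sum_h e_h\otimes e_{h^{-1}g}$ and $\Delta(z_i)=\sum_j z_j\otimes z_{i-j}$, the $B$-weight of $u\otimes v$ is obtained by multiplying those of $u$ and $v$ in $G\times\mathbb{Z}_q$. Hence the $B$-spectrum of $X\otimes Y$ is determined immediately for any $X,Y\in M$, and this restricts which of the families $T, U, V, W$ can occur as summands. The remaining step is to use $\Delta(x)=x\otimes x$ and $\Delta(y)=\sum_i y^{t^{i}}\otimes yz_i$ to compute the action of $x$ and $y$ on each combined weight space and match the resulting eigenvalue orbits against the four templates. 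The many case splits in the statement (whether $j'''\in -j''\langle t\rangle$, whether $i''+i'''t^{s+j''}=0$ for some $s$, etc.) arise precisely from whether certain sums of $y$-eigenvalues vanish modulo $p$: a vanishing eigenvalue causes the corresponding weight space to become $x$-invariant, producing an extra $T$- or $U$-summand in place of the $V$- or $W$-summand that would otherwise occur.

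The main obstacle will be the combinatorial bookkeeping in the $W\otimes W$ and $V\otimes V$ products, where several irreducible families collide and one must carefully follow the $y$-action (twisted by powers of $t$ through the coproduct) on the combined weight spaces, distinguishing generic orbits from the degenerate ones. Once the weight-eigenvalue framework is set up and the degeneration conditions on $\mathbb{Z}_p\rtimes\mathbb{Z}_q$ are enumerated, each fusion rule reduces to finite arithmetic in $\mathbb{Z}_p\rtimes\mathbb{Z}_q$, matching orbits of weights to the templates defining $T, U, V, W$, and the displayed formulas then follow from a uniform but lengthy case analysis.
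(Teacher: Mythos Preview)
Your proposal follows essentially the same route as the paper's proof. The paper splits the argument into two lemmas: the first establishes that $M$ is a basis of $K(p,q)$ by verifying the relations for a representative family (it chooses $W$), proving irreducibility via the $B$-action to isolate a basis vector and then using $y$ to generate the rest, distinguishing the modules by their $B$-module structure, and concluding with the same sum-of-squares dimension count you give; the second lemma works through each fusion product explicitly, writing down concrete basis vectors for the summands and checking the $x,y,z_k,e_g$-actions against the templates. Your $B$-weight-space framework and the explanation of the case splits via vanishing $y$-eigenvalues is exactly the organizing principle underlying those explicit computations.
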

Next, we introduce some lemmas to show above theorem.
\begin{lemma}\label{lem3.4.x}
The set $M$ is a basis of $K(p,q)$.
\end{lemma}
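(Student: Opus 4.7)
The plan is to use the fact that $A_{p,q}$ is a finite-dimensional $C^*$-Hopf algebra, hence semisimple, so a $\mathbb{Z}$-basis of $K(p,q)$ is obtained by listing one representative from each isomorphism class of simple $A_{p,q}$-modules. It therefore suffices to show (i) every element of $M$ is a well-defined simple $A_{p,q}$-module; (ii) different elements of $M$ are pairwise non-isomorphic; and (iii) the list exhausts all iso classes, which will be verified through the Artin--Wedderburn identity $\sum_{V\in M}(\dim V)^2=\dim A_{p,q}=p^2q^3$.

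For (i), the key idea is to work with the commutative subalgebra $B=\langle e_g,z_i\mid g\in G,\,1\le i\le q\rangle\subseteq A_{p,q}$, whose simple modules are $1$-dimensional and parametrized by the simple idempotents. In each of the four families, the displayed basis is an eigenbasis for $B$, so checking the $A_{p,q}$-module axioms reduces to verifying the non-diagonal relations of Theorem \ref{thm2.x}, in particular $yx=xy^t$ and $e_gy=y\sum_iz_ie_{a^{-1}\triangleleft x^i\,g\,a}$, on individual basis vectors; these are built into the definition of the weights. Simplicity then follows because any nonzero $A_{p,q}$-submodule is automatically $B$-stable, hence spanned by a subset of the chosen basis; but $x$ (in the $T,U,V$ families) or $y$ (in the $W$ family) cyclically permutes these basis vectors, forcing the submodule to coincide with the whole space.

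For (ii), the $B$-character is the decisive invariant. The eigenvalues of $\{e_g\}$ immediately identify which family a simple belongs to and pin down most of the parameters ($i,j''$, or $i'',j''$, or $i''',j'''$); the hypothesis $i'''\neq j'''$ is essential here, since it guarantees $t^{j'''}-t^{i'''}\not\equiv 0\pmod p$ and hence that the $p$ values of the $a$-exponent on $v_0,\dots,v_{p-1}$ are genuinely distinct. The remaining parameters are read off from the eigenvalues of $x$ or $y$ on the identified weight spaces, using that $\omega$ and $\eta$ are primitive roots of unity. The restriction $j',i''\in\{\beta^1,\dots,\beta^m\}$ picks exactly one representative from each $\langle t\rangle$-orbit in $\mathbb{Z}_p^\times$, and this is precisely what eliminates the accidental isomorphisms $V_{(i'',j'',k')}\cong V_{(i''t^r,j'',k't^{-r})}$ arising from the relabelling $u_l\mapsto u_{l+r}$ (and the corresponding shift symmetry in the $U$ family).

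Finally, for (iii), a direct computation using $mq=p-1$ yields
\begin{align*}
\sum_{V\in M}(\dim V)^2 &= q^2\cdot 1 + qm\cdot q^2 + mqp\cdot q^2 + q^2(q-1)\cdot p^2\\
&= q^2\bigl[\,1 + mq(1+p) + p^2(q-1)\,\bigr] = p^2q^3,
\end{align*}
which matches $\dim A_{p,q}$ and therefore forces $M$ to enumerate every simple up to isomorphism. The main obstacle I anticipate is precisely the accidental-isomorphism bookkeeping in step (ii), in particular ruling out duplicates inside the $V$ family: one must carefully track how the coset representatives $\beta^1,\dots,\beta^m$ interact with shifts in the $l$-index, and verify that fixing $i''$ to this cross-section is exactly what lets $k'$ range freely over $\{1,\dots,p\}$ without overcounting.
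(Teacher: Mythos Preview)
Your approach is essentially the paper's: verify simplicity, verify pairwise non-isomorphism, then match $\sum(\dim V)^2$ against $\dim A_{p,q}$. Your dimension computation is correct, and your treatment of step~(ii) is in fact more careful than the paper's (which asserts, somewhat imprecisely, that the $B$-module structure alone separates the classes; as you observe, one also needs the $x$- or $y$-eigenvalues to pin down the remaining parameters).

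There is one small slip in your simplicity argument. For the $U$-family, the subalgebra $B$ acts by the \emph{same} character on every basis vector $u_k$ (namely $e_g\mapsto\delta_{g,b^{i'}}$, $z_l\mapsto\delta_{l,i'}$), so ``$B$-stable $\Rightarrow$ spanned by a subset of the chosen basis'' fails here: any subspace of $U_{i',j'}$ is $B$-stable. The fix is immediate: use $y$ in place of $B$ for this family. Since $j'\in\mathbb{Z}_p^\times$ and the $t^k$ are pairwise distinct modulo $p$, the $y$-eigenvalues $\eta^{\,j't^k}$ ($0\le k\le q-1$) are pairwise distinct, so the $u_k$ are the one-dimensional $y$-eigenspaces; then $x$ cycles them and simplicity follows exactly as you intended. (For $V$ and $W$ your argument is fine as written, because there the $e_g$-weights already separate the basis vectors: $i''\in\mathbb{Z}_p^\times$ makes the exponents $i''t^l$ distinct, and $i'''\neq j'''$ makes $t^{j'''}-t^{i'''}$ a unit.)
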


\begin{proof}
Firstly, we show they are simple objects. We choose $W_{(i''',j''',k'')}$ to check since the other case can be done in similar way. To show $W_{(i''',j''',k'')}$ is $\ast$-representation of $A_{p,q}$, we need to prove the algebra relations hold and it keeps the involution. The only non-trivial cases are as follows:
$$e_gx=xe_{g\triangleleft x},\;e_gy=y\sum_{1\leq i\leq q} z_i (e_{a^{-1}\triangleleft x^i g a}).$$
By definition, $(e_gx).v_l=\omega^{k''}(\delta_{g,{a^{lt^{-1}d}}b^{i'''}})v_{t^{-1}l}$ and
$(xe_{g\triangleleft x}).v_l=\omega^{k''}(\delta_{g\triangleleft x,{a^{ld}}b^{i'''}})v_{t^{-1}l}$, where $d=t^{j'''}-t^{i'''}$. Since $a\triangleleft x^{-1}=a^{t^{-1}}$ and $b\triangleleft x^{-1}=b$, we get $\delta_{g,{a^{lt^{-1}d}}b^{i'''}}=\delta_{g\triangleleft x,{a^{ld}}b^{i'''}}$. Hence $(e_gx).v_l=(xe_{g\triangleleft x}).v_l$. Similarly, we have $(e_gy).v_l=(\delta_{g,{a^{(l+1)d}}b^{i'''}})v_{l+1}$ and $[y\sum_{1\leq i\leq q} z_i (e_{a^{-1}\triangleleft x^i g a})].v_l=(\delta_{a^{-1}\triangleleft x^{j'''} g a,{a^{ld}}b^{i'''}})v_{l+1}$, where $d=t^{j'''}-t^{i'''}$. Due to $a\triangleleft x^{-1}=a^{t^{-1}}$ and $b\triangleleft x^{-1}=b$, we get $\delta_{g,{a^{(l+1)d}}b^{i'''}}=\delta_{a^{-1}\triangleleft x^{j'''} g a,{a^{ld}}b^{i'''}}$. This implies $(e_gy).v_l=[y\sum_{1\leq i\leq q} z_i (e_{a^{-1}\triangleleft x^i g a})].v_l$. Next, we show $W_{(i''',j''',k'')}$ is simple. Suppose $0\neq W_0\subseteq W_{(i''',j''',k'')}$. Then we can find $v=\sum_{j=0}^{p-1}\lambda_j v_j\in W_0$ and some $0\neq \lambda_{i_0}\in \Bbbk$. Let $g_{i_0}=a^{di_0}b^{'''}$, where $d=t^{j'''}-t^{i'''}$. By definition, we have $e_{g_{i_0}}.v=\lambda_{i_0} v_{i_0}$. Thus $v_{i_0}\in W_0$. Since $y^{j-{i_0}}.v_{i_0}=v_j$ for $0\leq j\leq p-1$, we know $v_j\in W_0$. Hence $W_0=W$. This implies $W$ is simple.

Then we show the elements of $M$ are non-isomorphic. Recall that the subalgebra $B$ which is defined by $B=\langle e_g,z_i|\;g\in G,1\leq i\leq q\rangle$ as algebra. Directly one can know that $T_{i,j},\;U_{i',j'},\;V_{(i'',j'',k')},\;W_{(i''',j''',k'')}$ are non-isomorphic $B$ modules, thus the elements of $M$ are non-isomorphic as $A_{p,q}$ modules.

Define $T=\sum_{a\in M}\dim(a)a$. Note that $\dim(T)=p^2q^3=\dim(A_{p,q})$. Thus $M$ is a basis of $K(p,q)$.
\end{proof}

\begin{lemma}\label{lem3.5.x}
The fusion rule of $K(p,q)$ in Theorem \ref{thm3.x} hold.
\end{lemma}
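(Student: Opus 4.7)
The plan is to verify each fusion rule stated in Theorem \ref{thm3.x} by forming the tensor product $M\otimes N$ of the two simple modules on the left-hand side via the coproduct of Theorem \ref{thm2.x}, decomposing it into $B$-isotypic components, and matching these against the $B$-spectrum of the union of the claimed summands; here $B=\langle e_g,z_i\mid g\in G,\;1\leq i\leq q\rangle$ is the commutative diagonal subalgebra used in the proof of Lemma \ref{lem3.4.x}. Because the four families of simples are already distinguished by their $B$-weights, this strategy reduces each fusion rule to a bookkeeping check on $B$-weights, plus a verification that the residual action of $x$ and $y$ on each common-weight subspace reproduces the correct simple.

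First I would tabulate the $B$-weights of basis vectors in each family: on $T_{i,j}$ the weight is $(b^i,i)$; on $U_{i',j'}$ the weight is $(b^{i'},i')$ uniformly across all $u_k$; on $V_{(i'',j'',k')}$ the weight of $u_l$ is $(a^{i''t^l}b^{j''},j'')$; on $W_{(i''',j''',k'')}$ the weight of $v_l$ is $(a^{l(t^{j'''}-t^{i'''})}b^{i'''},j''')$. Using $\Delta(e_g)=\sum_h e_h\otimes e_{h^{-1}g}$, $\Delta(z_i)=\sum_j z_j\otimes z_{i-j}$, and the coproducts of $x$ and $y$ from Theorem \ref{thm2.x}, I would compute the action of $A_{p,q}$ on tensor basis vectors in each case. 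The easy fusion rules, namely those involving $T_{i,j}$ and the mixed products $U\otimes W$ and $V\otimes W$, produce a uniform $B$-weight structure and decompose directly into copies of a single family; these I would dispatch first and use as warm-up.

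The self-products $U\otimes U$, the mixed $U\otimes V$, and $V\otimes V$ require a normalisation step: one rewrites each $\mathbb{Z}_p^{\times}$-exponent arising in the $a$-component of the resulting $B$-weight in the canonical form $\alpha t^{\beta}$ with $\alpha\in\{\beta^1,\dots,\beta^m\}$, using the coset decomposition $\mathbb{Z}_p^{\times}=\bigcup_{i=1}^m\beta^i\langle t\rangle$. This normalisation is precisely what produces the parameters $\alpha_s,\beta_s$ and $\alpha'_s,\beta'_s$ appearing in the statement. The main obstacle lies in the degenerate branches: when $j''t^{i'''}+j'''t^{i_0}=0$ for some $i_0$, or analogously $i''+i'''t^{s_0+j''}=0$, the corresponding weight subspace has trivial or scalar $y$-action and splits further as $q$ one-dimensional $T$-summands, or alternatively reassembles into a single $U$-summand rather than a $V$-summand; in each case one must identify the correct alternative from the residual eigenvalues of $x$ and $y$ index by index.

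The subtlest single case is $W\otimes W$ with $i+i'=j+j'$. Generically the $B$-weight of $v_l\otimes v_{l'}$ depends genuinely on both $l$ and $l'$, but along this critical locus the two linear forms in $(l,l')$ making up the $a$-exponent degenerate, so that the weight depends on at most one combination of $l$ and $l'$; this is exactly the mechanism that forces $T$- and $U$-summands to appear alongside $V$-summands on the right-hand side. I would partition the $p^2$ basis vectors $v_l\otimes v_{l'}$ according to the fiber structure of the collapsed weight, diagonalise the action of $x$ on each invariant subspace, and read off the multiplicities of $T_{i+i',k+k'}$, $U_{i+i',\beta^s}$, and $V_{(l,i+i',s)}$ from the explicit $x$- and $y$-eigenvalues. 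A cumulative dimension count on each fusion rule, together with the total equality $\dim A_{p,q}=p^2q^3$ from Corollary \ref{cor4.2.x}, confirms that no summand is missed and closes the lemma.
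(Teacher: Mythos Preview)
Your proposal is correct and takes essentially the same approach as the paper: the paper also verifies each fusion rule case by case by constructing explicit basis vectors in $M\otimes N$ (organised precisely by the $B$-weights you describe), computing the actions of $x,y,z_k,e_g$ on them, matching against the defining formulas of the four families, and then invoking a dimension count to conclude the decomposition is complete. Your framing in terms of $B$-isotypic components and the coset normalisation $\alpha t^{\beta}$ is a cleaner way to say what the paper does by direct construction, and your identification of the degenerate branches and of the $W\otimes W$ case with $i+i'=j+j'$ as the subtlest point matches the paper's treatment exactly.
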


\begin{proof}
(1):\;$T_{i,j}.T_{i',j'}=T_{i+i',j+j'}$. Assume $T_{i,j}=\langle u \rangle$ and $T_{i',j'}=\langle u' \rangle$. By definition, we get
\begin{align*}
e_g.(u\otimes u')=\delta_{g,b^{i+i'}}(u\otimes u'), \; z_k.(u\otimes u')=\delta_{k,j+j'}(u\otimes u').
\end{align*}
Thus $T_{i,j}\otimes T_{i',j'}\cong T_{i+i',j+j'}$.

(2):\;$T_{i,j}.U_{i'',j''}=U_{i+i'',j''}$. Assume $T_{i,j}=\langle u \rangle$ and $U_{i'',j''}=\langle u_k'|\;0\leq k\leq q-1 \rangle$. Let $\lambda_k=\omega^{jk}$ and $u_k''=u\otimes \lambda_ku_k'$, where $0\leq k\leq q-1$. Then we have
\begin{align*}
x.u_k'=u_{k+1}', \; y.u_k'=\eta^{j''t^k}u_k',\\
z_l.u_k'=\delta_{l,i+i'}u_k',\; e_g.u_k'=\delta_{g,b^{i+i'}}u_k'.
\end{align*}
Thus $T_{i,j}\otimes U_{i'',j''}\cong U_{i+i'',j''}$.

(3):\;$T_{i,j}.V_{(i'',j'',k')}=V_{(i'',i+j'',k't^i)}$. Assume $T_{i,j}=\langle u \rangle$ and $V_{(i'',j'',k')}=\langle u_l|\;0\leq l\leq q-1 \rangle$. Let $\lambda_k=\omega^{-jk}$ and $u_l'=u\otimes \lambda_lu_{l-i}$, where $0\leq l\leq q-1$. Then we have
\begin{align*}
x.u_l'=u_{l-1}', \; y.u_k'=\eta^{(k't^i)t^{-l}}u_l',\\
z_k.u_l'=\delta_{k,i+j''}u_l',\; e_g.u_l'=\delta_{g,a^{i''t^l}}u_l'.
\end{align*}
Thus $T_{i,j}\otimes V_{(i'',j'',k')}\cong V_{(i'',i+j'',k't^i)}$.

(4):\;$T_{i,j}.W_{(i''',j''',k'')}=W_{(i+i''',i+j''',j+k'')}$. Assume $T_{i,j}=\langle u \rangle$ and $W_{(i''',j''',k'')}=\langle v_l|\;0\leq l\leq q-1 \rangle$. Let $v_l'=u\otimes v_{l}$, where $0\leq l\leq p-1$. Then we have
\begin{align*}
x.v_l'=\omega^{j+k''} v_{t^{-1}l}', \; y.v_l'=v_{l+1}',
\end{align*}
\begin{align*}
z_k.v_l'=\delta_{k,i+j'''}v_l',\;e_g.v_l'=\delta_{g,a^{l(t^{i+j'''}-t^{i+i'''})}}v_l'.
\end{align*}
Thus $T_{i,j}.W_{(i''',j''',k'')}\cong W_{(i+i''',i+j''',j+k'')}$.

(5.1):\;$U_{i'',j''}.U_{i''',j'''}=\sum_{s=0}^{q-1}U(i''+i''',\alpha_s)$, if $j'''\notin -j''\langle t\rangle$, where $j''t^{i'''}+j'''t^s=\alpha_s t^{\beta_s}$ for $0\leq s\leq q-1$ and $\alpha_s \in \{\beta^1, \dots , \beta^{m}\}$. Suppose $U_{i'',j''}=\langle u_k|\;0\leq k\leq q-1 \rangle$ and $U_{i''',j'''}=\langle u_k'|\;0\leq k\leq q-1 \rangle$. Let $u_l^s=u_{l-\beta_s}\otimes u_{s+l-\beta_s}'$ and $U_s=\langle u_l^s|\;0\leq l\leq q-1 \rangle$, where $0\leq s\leq q-1$. Then we have
\begin{align*}
x.u_l^s=u_{l+1}^s, \; y.u_l^s=\eta^{\alpha_s t^l}u_l^s,
\end{align*}
\begin{align*}
z_k.u_l^s=\delta_{k,i''+i'''}u_l^s,\;e_g.u_l^s=\delta_{g,b^{i''+i'''}}u_l^s.
\end{align*}
Thus $U_s\cong U_{i''+i''',\alpha_s}$, where $j't^{i''}+j''t^s=\alpha_s t^{\beta_s}$ and $\alpha_s \in \{\beta^1, \dots , \beta^{m}\}$. Since the definition of $U_{i'',j''} \otimes U_{i''',j'''}$, we know $U_{i'',j''}\otimes U_{i''',j'''}=\oplus_{s=0}^{q-1}U(i''+i''',\alpha_s)$.

(5.2):\;$U_{i'',j''}.U_{i''',j'''}=\sum_{s=0,s\neq i_0}^{q-1}U(i''+i''',\alpha_s)+\sum_{s=0}^{q-1}T(i''+i''',s)$, where $j''t^{i'''}+j'''t^{i_0}=0$ for some $0\leq i_0\leq q-1$. Suppose $U_{i'',j''}=\langle u_k|\;0\leq k\leq q-1 \rangle$ and $U_{i''',j'''}=\langle u_k'|\;0\leq k\leq q-1 \rangle$. Assume $s\neq i_0$ and  $0\leq s\leq q-1$. Let $u_l^s=u_{l-\beta_s}\otimes u_{s+l-\beta_s}'$ and $U_s=\langle u_l^s|\;0\leq l\leq q-1 \rangle$. Let $v_k=\sum_{r=0}^{q-1}\omega^{rk}(u_r\otimes u_{r+i_0}')$ for $0\leq k\leq q-1$. Denote $T=\langle v_r|\;0\leq r\leq q-1 \rangle$ as vector space. Assume $s\neq i_0$ and  $0\leq s\leq q-1$. Then we have
\begin{align*}
x.u_l^s=u_{l+1}^s, \; y.u_l^s=\eta^{\alpha_s t^l}u_l^s,
\end{align*}
\begin{align*}
z_k.u_l^s=\delta_{k,i''+i'''}u_l^s,\;e_g.u_l^s=\delta_{g,b^{i''+i'''}}u_l^s,
\end{align*}
and
\begin{align*}
x.v_k=\omega^{-k}v_k, \; y.v_k=v_k,
\end{align*}
\begin{align*}
z_s.v_k=\delta_{s,i''+i'''}v_k,\;e_g.v_k=\delta_{g,b^{i''+i'''}}v_k.
\end{align*}
Thus $T\cong \oplus_{r=0}T(i''+i''',r)$ and $U_s\cong U_{i''+i''',\alpha_s}$, where $j't^{i''}+j''t^s=\alpha_s t^{\beta_s}$ and $\alpha_s \in \{\beta^1, \dots , \beta^{m}\}$. Since the definition of $U_{i'',j''} \otimes U_{i''',j'''}$, we know $U_{i'',j''}\otimes U_{i''',j'''}\cong [\oplus_{r=0}T(i''+i''',r)]\oplus [\oplus_{s=0,s\neq i_0}^{q-1}U(i''+i''',\alpha_s)]$.

(6):\;$U_{i'',j''}.V_{(i''',j''',k'')}=\sum_{s=0}^{q-1}V_{(i''',i''+j''',t^{i''}(k''+j''t^{j'''+s}))}$. Suppose $U_{i'',j''}=\langle u_k|\;0\leq k\leq q-1 \rangle$ and $V_{(i''',j''',k'')}=\langle u_k'|\;0\leq k\leq q-1 \rangle$. Let $u_l^s=u_{s+i''-l}\otimes u_{l-i''}'$ and $V_s=\langle u_l^s|\;0\leq l\leq q-1 \rangle$, where $0\leq s\leq q-1$. Then we have
\begin{align*}
x.u_l^s=u_{l-1}^s, \; y.u_l^s=\eta^{[t^{i''}(k''+j''t^{j'''+s})]t^l}u_l^s,
\end{align*}
\begin{align*}
z_k.u_l^s=\delta_{k,i''+j'''}u_l^s,\;e_g.u_l^s=\delta_{g,a^{i'''}b^{i''+j'''}}u_l^s.
\end{align*}
Thus $V_s\cong V_{(i''',i''+j''',t^{i''}(k''+j''t^{j'''+s}))}$, where $0\leq s\leq q-1$. Since the definition of $U_{i'',j''}\otimes V_{(i''',j''',k'')}$, we know $U_{i'',j''}\otimes V_{(i''',j''',k'')}\cong \oplus_{s=0}^{q-1}V_{(i''',i''+j''',t^{i''}(k''+j''t^{j'''+s}))}$.

(7):\;$U_{i'',j''}.W_{(i''',j''',k'')}=\sum_{s=0}^{q-1}W_{(i''+i''',i''+j''',s)}$. Suppose $U_{i'',j''}=\langle u_k|\;0\leq k\leq q-1 \rangle$ and $W_{(i''',j''',k'')}=\langle v_l|\;0\leq l\leq p-1 \rangle$. Let $v_l^s=y^l.(\sum_{r=0}^{q-1}\omega^{rs}u_r\otimes v_0)$ and $W_s=\langle u_l^s|\;0\leq l\leq p-1 \rangle$, where $0\leq s\leq q-1$. Then we have
\begin{align*}
x.v_l^s=\omega^{k''-s}v_{lt^{-1}}^s, \; y.v_l^s=v_{l+1}^s,
\end{align*}
\begin{align*}
z_k.u_l^s=\delta_{k,i''+j'''}u_l^s,\;e_g.u_l^s=\delta_{g,a^{l(t^{i''+j'''}-t^{i''+i'''})}b^{i''+i'''}}u_l^s.
\end{align*}
Thus $W_s\cong W_{(i''+i''',i''+j''',k''-s)}$, where $0\leq s\leq q-1$. Since the definition of $U_{i'',j''}\otimes W_{(i''',j''',k'')}$, we know $U_{i'',j''}\otimes W_{(i''',j''',k'')}\cong \oplus_{s=0}^{q-1}W_{(i''+i''',i''+j''',s)}$.

(8.1):\; $V_{(i'',j'',k')}.V_{(i''',j''',k'')}=\sum_{s=0}^{q-1}V_{(\alpha_s',j''+j''',t^{\beta_s'}(k't^{j'''}+k''t^{-s}))}$ if $i''\notin -i'''\langle t\rangle$ and $i''+i'''t^{s+j''}=\alpha_s' t^{\beta_s'}$, where $\alpha_s'\in \{\beta^1\dots \beta^m\}$. Suppose $V_{(i'',j'',k')}=\langle u_k|\;0\leq k\leq q-1 \rangle$ and $V_{(i''',j''',k'')}=\langle u_k'|\;0\leq k\leq q-1 \rangle$. Let $u_k^s=x^{\beta_s'-k}.(u_0\otimes u_{s}')$ and $V_s=\langle u_k^s|\;0\leq k\leq q-1 \rangle$, where $0\leq s\leq q-1$. Then we have
\begin{align*}
x.u_k^s=u_{k-1}^s, \; y.u_k^s=\eta^{t^{\beta_s'}(k't^{j'''}+k''t^{-s})t^{-k}}u_k^s,
\end{align*}
\begin{align*}
z_l.u_k^s=\delta_{l,j''+j'''}u_k^s,\;e_g.u_k^s=\delta_{g,a^{\alpha_s' t^k}b^{j''+j'''}}u_k^s.
\end{align*}
Thus $V_s\cong V_{(\alpha_s',j''+j''',t^{\beta_s'}(k't^{j'''}+k''t^{-s}))}$, where $0\leq s\leq q-1$. Since the definition of $V_{(i'',j'',k')}\otimes V_{(i''',j''',k'')}$, we get $V_{(i'',j'',k')}\otimes V_{(i''',j''',k'')}\cong \oplus_{s=0}^{q-1}V_{(\alpha_s',j''+j''',t^{\beta_s'}(k't^{j'''}+k''t^{-s}))}$, where $\alpha_s', \beta_s'$ is determined by
$i''+i'''t^{s+j''}=\alpha_s' t^{\beta_s'}$ and $\alpha_s'\in \{\beta^1\dots \beta^m\}$.

(8.2):\;$V_{(i'',j'',k')}.V_{(i''',j''',k'')}=\sum_{s=0,s\neq s_0}^{q-1}V_{(\alpha_s',j''+j''',t^{\beta_s'}(k't^{j'''}+k''t^{-s}))}+\sum_{s=0}^{q-1}T_{j''+j''',s}$ if $i''+i'''t^{s_0+j''}=0$ for some $0\leq s_0\leq q-1$ and $k't^{j'''}+k''t^{-s_0}=0$. Suppose $V_{(i'',j'',k')}=\langle u_k|\;0\leq k\leq q-1 \rangle$ and $V_{(i''',j''',k'')}=\langle u_k'|\;0\leq k\leq q-1 \rangle$. Let $u_k^s=x^{\beta_s'-k}.(u_0\otimes u_{s}')$ and $V_s=\langle u_k^s|\;0\leq k\leq q-1 \rangle$, where $0\leq s\leq q-1$. Similar to before, we have
\begin{align*}
x.u_k^s=u_{k-1}^s, \; y.u_k^s=\eta^{t^{\beta_s'}(k't^{j'''}+k''t^{-s})t^{-k}}u_k^s,
\end{align*}
\begin{align*}
z_l.u_k^s=\delta_{l,j''+j'''}u_k^s,\;e_g.u_k^s=\delta_{g,a^{\alpha_s' t^k}b^{j''+j'''}}u_k^s.
\end{align*}
If $s\neq s_0$, then $V_s\cong V_{(\alpha_s',j''+j''',t^{\beta_s'}(k't^{j'''}+k''t^{-s}))}$, where $0\leq s\leq q-1$. For the case $s=s_0$, we define $v_k=\sum_{r=0}^{q-1}\omega^{kr}u_r^{s_0}$ and $T=\langle v_k|\;0\leq k\leq q-1 \rangle$. Since $k't^{j'''}+k''t^{-s_0}=0$, one can get that
\begin{align*}
x.v_k=\omega^k v_k, \; y.v_k=v_k,
\end{align*}
\begin{align*}
z_l.u_k^s=\delta_{l,j''+j'''}v_k,\;e_g.v_k=\delta_{g,b^{j''+j'''}}v_k.
\end{align*}
Thus $T\cong \oplus_{s=0}^{q-1}T_{j''+j''',s}$. Since the definition of $V_{(i'',j'',k')}\otimes V_{(i''',j''',k'')}$, we get $V_{(i'',j'',k')}.V_{(i''',j''',k'')}=\oplus_{s=0,s\neq s_0}^{q-1}V_{(\alpha_s',j''+j''',t^{\beta_s'}(k't^{j'''}+k''t^{-s}))}\oplus (\oplus_{s=0}^{q-1}T_{j''+j''',s})$.

(8.3):\;$V_{(i'',j'',k')}.V_{(i''',j''',k'')}=\sum_{s=0,s\neq s_0}^{q-1}V_{(\alpha_s',j''+j''',t^{\beta_s'}(k't^{j'''}+k''t^{-s}))}+U_{j''+j''',\alpha_{s_0}''}$ if $i''+i'''t^{s_0+j''}=0$ for some $0\leq s_0\leq q-1$ and $k't^{j'''}+k''t^{-s_0}=\alpha_{s_0}'' t^{\beta_{s_0}''}$, where $\alpha_s''\in \{\beta^1\dots \beta^m\}$. Suppose $V_{(i'',j'',k')}=\langle u_k|\;0\leq k\leq q-1 \rangle$ and $V_{(i''',j''',k'')}=\langle u_k'|\;0\leq k\leq q-1 \rangle$. Let $u_k^s=x^{\beta_s'-k}.(u_0\otimes u_{s}')$ and $V_s=\langle u_k^s|\;0\leq k\leq q-1 \rangle$, where $0\leq s\leq q-1$. Similar to before, we have
\begin{align*}
x.u_k^s=u_{k-1}^s, \; y.u_k^s=\eta^{t^{\beta_s'}(k't^{j'''}+k''t^{-s})t^{-k}}u_k^s,
\end{align*}
\begin{align*}
z_l.u_k^s=\delta_{l,j''+j'''}u_k^s,\;e_g.u_k^s=\delta_{g,a^{\alpha_s' t^k}b^{j''+j'''}}u_k^s.
\end{align*}
If $s\neq s_0$, then $V_s\cong V_{(\alpha_s',j''+j''',t^{\beta_s'}(k't^{j'''}+k''t^{-s}))}$, where $0\leq s\leq q-1$. For the case $s=s_0$, we define $\overline{u_k}=u_{\beta_s''-k}^{s_0}$ and $U=\langle \overline{u_k}|\;0\leq k\leq q-1 \rangle$. Then we have
\begin{align*}
x.\overline{u_k}=\overline{u_{k+1}}, \; y.\overline{u_k}=\eta^{\alpha_s''t^k}\overline{u_k},
\end{align*}
\begin{align*}
z_l.\overline{u_k}=\delta_{l,j''+j'''}\overline{u_k},\;e_g.\overline{u_k}=\delta_{g,b^{j''+j'''}}\overline{u_k}.
\end{align*}
Thus $U\cong U_{j''+j''',\alpha_{s_0}''}$. By definition of $V_{(i'',j'',k')}\otimes V_{(i''',j''',k'')}$, we have
$$V_{(i'',j'',k')}.V_{(i''',j''',k'')}=\oplus_{s=0,s\neq s_0}^{q-1}V_{(\alpha_s',j''+j''',t^{\beta_s'}(k't^{j'''}+k''t^{-s}))}\oplus U_{j''+j''',\alpha_{s_0}''}.$$

(9):\;$V_{(i'',j'',k')}.W_{(i''',j''',k'')}=\sum_{s=0}^{q-1}W_{(j''+i''',j''+j''',s)}$. Suppose $V_{(i'',j'',k')}=\langle u_k|\;0\leq k\leq q-1 \rangle$ and $W_{(i''',j''',k'')}=\langle v_l|\;0\leq l\leq p-1 \rangle$. For convenience, we denote $l_0=-i''d^{-1}t^{j''}$, where $d=t^{j'''}-t^{i'''}$. Let $v_l^s=y^l.(\sum_{j=0}^{q-1}\omega^{js}u_j\otimes v_{l_0t^j})$ and $W_s=\langle v_l^s|\;0\leq l\leq p-1 \rangle$, where $0\leq s\leq q-1$. Then we have
\begin{align*}
x.v_l^s=\omega^{k''+s}v_l^s, \; y.v_l^s=v_{l+1}^s,
\end{align*}
\begin{align*}
z_k.v_l^s=\delta_{k,j''+j'''}v_l^s,\;e_g.v_l^s=\delta_{g,a^{l(t^{j''+j'''}-t^{j''+i'''})}b^{j''+i'''}}v_l^s.
\end{align*}
Thus $W_s\cong W_{(j''+i''',j''+j''',k''+s)}$, where $0\leq s\leq q-1$. By definition of $V_{(i'',j'',k')}\otimes W_{(i''',j''',k'')}$, we get $V_{(i'',j'',k')}.W_{(i''',j''',k'')}=\oplus_{s=0}^{q-1}W_{(j''+i''',j''+j''',s)}$.

(10.1):\; $W_{(i,j,k)}.W_{(i',j',k')}=\sum_{s=0}^{q-1}mW_{(i+i',j+j',s)}+W_{(i+i',j+j',k+k')}$ if $i+i'\neq j+j'$. Suppose $W_{(i,j,k)}=\langle v_l|\;0\leq l\leq p-1 \rangle$ and $W_{(i',j',k')}=\langle v_l'|\;0\leq l\leq p-1 \rangle$. Define $M=\{(l,r)|(l,r)\in \mathbb{Z}_p\times \mathbb{Z}_q \;\text{such that }ld+rd't^i=0\}$, where $d=t^j- t^i$ and $d'=t^{j'}- t^{i'}$. Assume $(0,0)\neq (l_0,r_0)\in M$ and $0\leq s_0\leq q-1$. Let $\overline{v_l}=y^l.[\sum_{r=0}^{q-1}\omega^{s_0r}x^r.(v_{l_0}\otimes v_{r_0}')]$ and $W_{(l_0,s_0)}=\langle \overline{v_l}|\;0\leq l\leq p-1 \rangle$.
Then we have
\begin{align*}
x.\overline{v_l}=\omega^{-s_0}\overline{v_{t^{-1}l}}, \; y.\overline{v_l}=\overline{v_{l+1}},
\end{align*}
\begin{align*}
z_s.\overline{v_l}=\delta_{s,j+j'}\overline{v_l},\;e_g.\overline{v_l}=\delta_{g,a^{l(t^{j+j'}-t^{i+i'})}b^{i+i'}}\overline{v_l}.
\end{align*}
Thus $W_{(l_0,s_0)}\cong W_{(i+i',j+j',-s_0)}$. For the case $(l,r)=(0,0)\in M$, we define $v_l'=y^l.(v_0\otimes v_0')$ for $0\leq l\leq p-1$ and $W=\langle v_l'|\;0\leq l\leq p-1 \rangle$. Then we have
\begin{align*}
x.v_l'=\omega^{k+k'}v_{t^{-1}l}', \; y.v_l'=v_{l+1}',
\end{align*}
\begin{align*}
z_s.v_l'=\delta_{s,j+j'}v_l',\;e_g.v_l'=\delta_{g,a^{l(t^{j+j'}-t^{i+i'})}b^{i+i'}}v_l'.
\end{align*}
Thus $W\cong W_{(i+i',j+j',k+k')}$. Note that $\mathbb{Z}_p^\times=\cup_{i=1}^m \beta^i \langle t\rangle$ and since the definition of $W_{(i,j,k)}\otimes W_{(i',j',k')}$, we get $W_{(i,j,k)}\otimes W_{(i',j',k')}=\oplus_{s=0}^{q-1}W_{(i+i',j+j',s)}^{\oplus m}\oplus W_{(i+i',j+j',k+k')}$.

(10.2):\; $W_{(i,j,k)}.W_{(i',j',k')}=T_{i+i',k+k'}+\sum_{s=1}^m U_{i+i',\beta^s}+\sum_{l=0}^{p-1}\sum_{s=0}^{q-1}V_{(l,i+i',s)}$ if $i+i'=j+j'$. Suppose $W_{(i,j,k)}=\langle v_l|\;0\leq l\leq p-1 \rangle$ and $W_{(i',j',k')}=\langle v_l'|\;0\leq l\leq p-1 \rangle$. Let $u_0=\sum_{s=0}^{p-1}y^s.(v_0\otimes v_0')$ and $T=\langle u_0 \rangle$. Then we have
\begin{align*}
x.u_0=\omega^{k+k'}u_0, \; y.u_0=u_0,
\end{align*}
\begin{align*}
z_s.u_0=\delta_{s,i+i'}u_0,\;e_g.u_0=\delta_{g,b^{i+i'}}u_0.
\end{align*}
Thus $T\cong T(i+i',k+k')$. Let $1\leq s\leq p-1$. Define $v_r^s=x^r.(\sum_{l=0}^{p-1}\eta^{-ls}v_0\otimes v_0')$ and $U_s=\langle v_r^s|\;0\leq r\leq q-1 \rangle$. Then we have
\begin{align*}
x.v_r^s=v_{r+1}^s, \; y.v_r^s=\eta^{st^r}v_r^s,
\end{align*}
\begin{align*}
z_l.v_r^s=\delta_{l,i+i'}v_r^s,\;e_g.v_r^s=\delta_{g,b^{i+i'}}v_r^s.
\end{align*}
Thus $U_s\cong U_{i+i',s}$. Define $M=\{(l,r)|(l,r)\in \mathbb{Z}_p\times \mathbb{Z}_q \;\text{such that }ld+rd't^i=0\}$, where $d=t^j- t^i$ and $d'=t^{j'}- t^{i'}$. Let $(l_1,r_1)\in \mathbb{Z}_p\times \mathbb{Z}_q$ and $(l_1,r_1)\notin M$. Then $l_1d+r_1d't^i\neq0$ by definition. For simple, we define $f(l_1,r_1)=l_1d+r_1d't^i$. Let $\overline{v_l^s}=x^{-l}.(\sum_{r=0}\eta^{-rs}y^r.(v_{l_1}\otimes v_{r_1})')$ and let $V_{(s,f(l_1,r_1))}=\langle \overline{v_l^s}|\;0\leq l\leq q-1 \rangle$, where $0\leq s\leq p-1$ and $0\leq l\leq q-1$. Then we have
\begin{align*}
x.\overline{v_l^s}=\overline{v_{l-1}^s}, \; y.\overline{v_{l}^s}=\eta^{s.t^{-l}}\overline{v_{l}^s},
\end{align*}
\begin{align*}
z_r.\overline{v_{l}^s}=\delta_{r,i+i'}\overline{v_{l}^s},\;e_g.\overline{v_{l}^s}=\delta_{g,a^{f(l_1,r_1)t^l}b^{i+i'}}\overline{v_{l}^s}.
\end{align*}
Thus $V_{(s,f(l_1,r_1))}\cong V_{(f(l_1,r_1),i+i',s)}$. Using the definition, one can get $U_s=U_{ts}$ and $V_{(s,f(l_1,r_1))}=V_{(s,f(tl_1,tr_1))}$ for $0\leq s\leq p-1$ and $(l_1,r_1)\notin M$. Since the definition of $W_{(i,j,k)}\otimes W_{(i',j',k')}$, we get $W_{(i,j,k)}\otimes W_{(i',j',k')}=T_{i+i',k+k'}\oplus_{r=1}^{m}U_{i+i',\beta^r}\oplus (\oplus_{s=0}^{p-1}\oplus_{r=1}^m V(\beta^r,i+i',s))$.
\end{proof}

\textbf{Proof of Theorem \ref{thm3.x}.} Since Lemmas \ref{lem3.4.x}-\ref{lem3.5.x}, we get what we want.
\qed


\end{document}